\newtheorem{theorem}{Theorem}[section]
\newtheorem{lemma}{Lemma}[section]
\newtheorem{proposition}{Proposition}[section]
\newtheorem{remark}{Remark}[section]
\newtheorem{definition}{Definition}[section]
\newcommand{\R}{{\mathbb R}}
\def\phi{\varphi}
\def\phi{\varphi}
\def\phi{\varphi}
\def\beq{\begin{equation}}
\def\eeq{\end{equation}}
\def\beqn{\begin{eqnarray}}
\def\eeqn{\end{eqnarray}}
\numberwithin{equation}{section}
\numberwithin{figure}{section}
\title{Asymptotics for time-changed diffusions}
\date{\today}
\author{\textsc{Raffaela Capitanelli}\footnote{
Dipartimento di Scienze di Base e Applicate per l'Ingegneria, Sapienza"  Universit\`{a}  di Roma,
Via A. Scarpa 16,  00161 Roma, Italy} \ \
\& \ \
\textsc{Mirko D'Ovidio}$^\ast$\\
{\small raffaela.capitanelli@uniroma1.it, mirko.dovidio@uniroma1.it}
}
\begin{document}

\maketitle

\begin{abstract}
We consider time-changed diffusions driven by generators with discontinuous coefficients. The PDE's connections are investigated and in particular some results on the asymptotic analysis according to the behaviour of the coefficients are presented. 
\end{abstract}

%\tableofcontents

\section{Introduction}
\label{Sec-one}

We consider the balls $\Omega_l \subset \Omega_\ell \subset \Omega_r$ centered at the same point where $\Omega_q$ is a ball with radius $q=l,\ell,r$ where $r=\ell+\epsilon$ and $\epsilon >0$.  Let us introduce two independent Brownian motions $B^1, B^2$ and the process (see the generator \eqref{A-form} below)
\begin{align*}
B^{(\alpha, \lambda)}_t := \left\lbrace
\begin{array}{ll}
\displaystyle B^1_t & \textrm{on } \Omega_\ell \setminus \overline{\Omega_l} \\
\displaystyle B^2_{\lambda t} & \textrm{on } \Sigma_\epsilon = \Omega_r \setminus \overline{\Omega_\ell}
\end{array}
\right. 
\end{align*}
with skew condition on $\partial \Omega_\ell$:
\begin{align*}
\forall\, x \in \partial \Omega_\ell, \qquad \mathbb{P}_x(B^{(\alpha, \lambda)}_t \in \Omega_\ell \setminus \overline{\Omega_l}) =1-\alpha \quad \textrm{and} \quad \mathbb{P}_x(B^{(\alpha, \lambda)}_t \in \Sigma_\epsilon) = \alpha.
\end{align*}
Moreover, $B^{(\alpha, \lambda)}$ is killed on $\partial \Omega_l$ and $\partial \Omega_r$. Since we have different variances depending on $\lambda>0$, we refer to the process $B^{(\alpha, \lambda)}$ as a modified process. Obviously, $\alpha \in (0,1)$ is the skewness parameter and $B^{(\alpha, \lambda)}$ is called skew process. We write $\alpha=\alpha_\epsilon$, $\lambda=\lambda_\epsilon$ by underling the dependence from $\epsilon$ ($\alpha_\epsilon\to 0$ and $\lambda_\epsilon\to 0$ as $\epsilon\to 0$) and we consider the collapsing domain $\Sigma_\epsilon \subset \mathbb{R}^d$ with $d=1,2$ (that is, with vanishing thickness $\epsilon$). 

Our aim is to study  a killed diffusion on $\Omega_r \setminus \overline{\Omega_l}$ with skew condition on $\partial \Omega_\ell$ and different behaviour in $\Sigma_\epsilon=\Omega_r \setminus \overline{\Omega_\ell}$ and $\Omega_\ell \setminus \overline{\Omega_l}$  under the assumption \begin{align}
\lim_{\epsilon\to 0} \frac{\alpha \epsilon}{\lambda} =0.
\label{main-condition}
\end{align}
For the classical case $\alpha=\lambda$ which has been extensively investigated in literature (see for example \cite{AB,BCF} and the references therein), the condition \eqref{main-condition} becomes trivial. 

Our new result is given by the asymptotic analysis obtained under \eqref{main-condition} with $\alpha\neq\lambda$. \\

We first consider time-changed Brownian motions on the line. After that we pass to Brownian motion on balls and the associated  skew-product representations $(R, \Theta)$. In particular, we study the time-changed Bessel process and the winding number $\Theta$ can be neglected by exploiting the isotropy of the Brownian diffusions. Thus,  we focus on the radial parts $R$ which are driven by a Markov generators with representation given in terms of scale functions and speed measures. The eigenfunction expansions of such generators of these Markov processes and their associated semigroups can be explicitly computed as solutions to the corresponding Sturm-Liouville problem  (\cite{dovSL,LMS,LS}). 

We obtain the following result.

\begin{theorem}
\label{main-thm}
Let $X_t$ be a reflecting Brownian motion on $\overline{\Omega_\ell} \setminus \overline{\Omega_l}$ with boundary local time
\begin{align*}
L^{\partial \Omega_\ell}_t (X) := \int_0^t \mathbf{1}_{\partial \Omega_\ell} (X_s)ds.
\end{align*}
Under \eqref{main-condition}, we have that
%\begin{align*}
%\lim_{\epsilon \to 0}\mathbb{E}_x \left[ \int_0^{\tau_{D_\epsilon}} \mathbf{1}_{\mathbb{R}^2}(B^{(\alpha, \lambda)}_t)dt \right] = \mathbb{E}_x\left[ \int_0^\infty  \mathbf{1}_{\mathbb{R}^2}(X_t) \exp \left( - \left( \lim_{\epsilon \to 0} \frac{\alpha}{(1-\alpha) \epsilon} \right) L^{\partial D}_t(X) \right) \right]
%\end{align*}
\begin{align}
\label{eq-main-thm}
\lim_{\epsilon \to 0}\mathbb{E}_x \left[ \int_0^\infty f(B^{(\alpha, \lambda)}_t) M^\epsilon_t dt \right] = \mathbb{E}_x\left[ \int_0^{\tau_l}  f(X_t) \exp \left( - \left( \lim_{\epsilon \to 0} \frac{\alpha}{(1-\alpha) \epsilon} \right) L^{\partial \Omega_\ell}_t(X) \right) dt \right]
\end{align}
where $M^\epsilon_t := \mathbf{1}_{(t < \tau_{\epsilon})}$ with $\tau_{\epsilon} := \inf \{s > 0\,:\, B^{(\alpha, \lambda)}_s \notin \Omega_r \setminus \overline{\Omega_l}\}$ and $\tau_{l} = \inf \{s > 0\,:\, X_s \in \partial \Omega_l\}$.
\end{theorem}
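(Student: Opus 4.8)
The plan is to reduce \eqref{eq-main-thm} to a one–dimensional computation and then to pass to the limit through the explicit scale function and speed measure of the radial generator. Writing $G^\epsilon f(x) := \mathbb{E}_x[\int_0^\infty f(B^{(\alpha,\lambda)}_t)M^\epsilon_t\,dt]$, the left-hand side is the $0$–potential of the process $B^{(\alpha,\lambda)}$ killed on $\partial\Omega_l\cup\partial\Omega_r$. First I would invoke the skew–product representation $(R,\Theta)$: by isotropy of $B^1,B^2$ the winding part $\Theta$ decouples, so that $G^\epsilon f(x)$ depends only on the radial diffusion $R$ on $[l,r]$ (for general $f$ one first replaces $f$ by its angular average). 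Thus it suffices to treat the one–dimensional, time–changed, skew diffusion on $[l,r]$ killed at the endpoints, whose generator is of the form $\frac{d}{dm^\epsilon}\frac{d}{ds^\epsilon}$.

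Second, I would exhibit $s^\epsilon$ and $m^\epsilon$ explicitly. On the inner shell $[l,\ell]$ the process is $B^1$, giving the (Bessel) scale and speed with, after normalisation, scale slope $1$ and $m^\epsilon=2\,dx$ to leading order. The skew condition at $\ell$ forces a kink in the scale function with slope ratio $s^{\epsilon\prime}(\ell^+)/s^{\epsilon\prime}(\ell^-)=(1-\alpha)/\alpha$, since a natural–scale Brownian motion leaves $\ell$ to the right with probability $\alpha$. In the layer $\Sigma_\epsilon=[\ell,r]$ the process is $B^2_{\lambda\cdot}$; combining the time change $\lambda$ with the skew scale slope $q:=(1-\alpha)/\alpha$ gives, on $[\ell,r]$, scale increment $s^\epsilon(r)-s^\epsilon(\ell)=q\,\epsilon$ and speed mass $m^\epsilon(\Sigma_\epsilon)\asymp \frac{2\alpha\epsilon}{\lambda(1-\alpha)}$. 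This is the decisive bookkeeping: the layer behaves as a pure resistor to the killing boundary $r$, of scale length $q\epsilon\to 1/\kappa$ with $\kappa:=\lim_{\epsilon\to0}\frac{\alpha}{(1-\alpha)\epsilon}$, while its speed mass is $\asymp \alpha\epsilon/\lambda\to 0$ exactly under \eqref{main-condition}.

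Third, I would insert the explicit one–dimensional Green kernel of the diffusion killed at $l$ and $r$,
\[
G^\epsilon(x,y)=\frac{\bigl(s^\epsilon(x\wedge y)-s^\epsilon(l)\bigr)\bigl(s^\epsilon(r)-s^\epsilon(x\vee y)\bigr)}{s^\epsilon(r)-s^\epsilon(l)},
\qquad
G^\epsilon f(x)=\int_l^r G^\epsilon(x,y)\,f(y)\,m^\epsilon(dy),
\]
and split the integral over $[l,\ell]$ and over $\Sigma_\epsilon$. On $\Sigma_\epsilon$ the kernel stays bounded while $m^\epsilon(\Sigma_\epsilon)\to0$, so this contribution vanishes. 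On $[l,\ell]$ one has $s^\epsilon(r)-s^\epsilon(\ell)=q\epsilon\to1/\kappa$, whence $G^\epsilon(x,y)\to \frac{(x\wedge y-l)(\ell-x\vee y+1/\kappa)}{(\ell-l)+1/\kappa}$, which is precisely the Green function on $[l,\ell]$ of the operator with Dirichlet condition at $\partial\Omega_l$ and elastic (Robin) condition of parameter $\kappa$ at $\partial\Omega_\ell$. Identifying the resulting potential $\int_l^\ell G^0(x,y)f(y)\,m^0(dy)$ with the Feynman–Kac representation of the elastic resolvent yields the right-hand side $\mathbb{E}_x[\int_0^{\tau_l}f(X_t)e^{-\kappa L^{\partial\Omega_\ell}_t(X)}dt]$, since exponential killing at the interface at rate $\kappa$ times the boundary local time is exactly the probabilistic counterpart of the Robin condition.

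The main obstacle is the uniform control needed to exchange the limit $\epsilon\to0$ with the integration against the $\epsilon$–dependent speed measure $m^\epsilon$: one must dominate $G^\epsilon(\cdot,y)\,m^\epsilon(dy)$ uniformly and verify that the skew–induced scale slope feeds into $m^\epsilon(\Sigma_\epsilon)$ with the factor $\alpha/\lambda$, which is what makes \eqref{main-condition} the sharp hypothesis rather than the naive $\epsilon/\lambda\to0$. A secondary point is the rigorous reduction in the first step for non–radial $f$ when $d=2$, together with the control of the lower–order Bessel corrections to $s^\epsilon,m^\epsilon$ in the thin layer; an alternative, fully equivalent route is to prove Mosco convergence of the associated Dirichlet forms (equivalently, convergence of the Sturm–Liouville eigenfunction expansions) to the elastic form, from which strong resolvent convergence and hence \eqref{eq-main-thm} follow.
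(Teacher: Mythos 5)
Your proposal is correct, and it rests on the same two pillars as the paper's own argument: the skew-product reduction to the radial (modified skew Bessel) diffusion, and the scale-function/speed-measure description of that diffusion, with \eqref{main-condition} entering exactly through the thin layer. Where you genuinely diverge is in how the limit of the potentials is taken. The paper computes, via the Karlin--Taylor formula \eqref{KTsol}, the explicit mean exit time $v_\epsilon$ solving $Av_\epsilon=-1$ (formula \eqref{v-epsilon-explicit}), proves pointwise convergence of the single coefficient $2\Pi^*(r)/((1-\alpha)\Pi_3(r))$ under \eqref{main-condition} (Theorem \ref{thm-2dim-condition}), reads off the limiting boundary condition, and only then passes from $f\equiv 1$ to general $f$ by invoking the uniqueness of the subprocess attached to a multiplicative functional (Blumenthal--Getoor), identifying $M^{(0)}_t$ with $e^{-G L_t}\mathbf{1}_{(t<\tau_l)}$. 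You instead keep general $f$ throughout: you unwrap the same Karlin--Taylor object into the Green kernel $G^\epsilon(x,y)$, kill the $\Sigma_\epsilon$ contribution by the speed-mass estimate $m^\epsilon(\Sigma_\epsilon)\asymp \alpha\epsilon/\lambda \to 0$ (exactly where \eqref{main-condition} is sharp, as you note), let the kernel converge to the Robin kernel with parameter $\kappa=\lim \alpha/((1-\alpha)\epsilon)$, and finish by dominated convergence plus the Feynman--Kac/elastic identification. Your route buys a self-contained treatment of non-constant $f$ --- the paper's citation-based bootstrap is its tersest step --- at the cost of the uniform domination of $G^\epsilon(\cdot,y)\,m^\epsilon(dy)$ that you flag; your fallback via Mosco convergence is precisely the paper's Section 6 strategy (Theorems \ref{teo1m}--\ref{teo3m}). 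Two remarks: the rigorous reduction for non-radial $f$ is glossed over by the paper as well, so you are not losing ground there; and your sign conventions ($\kappa=0$ Neumann, $\kappa=\infty$ Dirichlet) agree with Remark \ref{remarl-result-1dim}, with Section 6, and with the proof of Theorem \ref{main-thm} itself, whereas the labels (ii)/(iii) in the statement of Theorem \ref{thm-2dim-condition} are swapped relative to its own proof --- your bookkeeping is the consistent one.
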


An interesting connection is related to a conjecture of Feller. An elastic Brownian motion on $[0, \infty)$ with condition $\alpha u(0) = (1-\alpha) u^\prime(0)$, $\alpha \in (0,1)$ is identical in law to a reflecting Brownian motion  $B^+$ killed according with the conditional law $\mathbb{P}(T > t | B^+) = \exp( - \frac{\alpha}{1-\alpha} L^{\partial \Omega_\ell}_t)$.  We notice that the special cases $\alpha=1$ or $\alpha=0$ correspond to Dirichlet or Neumann conditions. For example, for $\alpha=1$, the process $B^{(1,\lambda)}$ moves on $\Omega_r \setminus \Omega_\ell$ collapsing to $\partial \Omega_\ell$ as the thickness $\epsilon\to 0$. Thus, we consider only starting point $x \in \partial \Omega_\ell$. On the other hand, $X_t$ moves on $\overline{\Omega_\ell}$. If $X_t$ is forced to start at $x \in \partial \Omega_\ell$, the local time is positive and the right-hand side of \eqref{eq-main-thm} equals $0$, that is, we have the Dirichlet condition on $\partial \Omega_\ell$. The Neumann boundary condition follows immediately by considering $B^{(0, \lambda)} \in \overline{\Omega_\ell} \setminus \overline{\Omega_l}$ and the right-hand side of \eqref{eq-main-thm} for $\alpha=0$.

The plan of the work is the following. In Section 2 we give some preliminaries about the characterization of one-dimensional diffusions by means of scale functions and speed measures. In Section  3 we study the one-dimensional Brownian motion by exploiting the technique based on the corresponding scale function and speed measure. We provide some explicit results about the mean exit time. In Section 4 we study the 2-dimensional Brownian motion by skew-product representation and therefore we apply the technique introduced in Section 2 by considering the associated Bessel process. We provide some results on the asymptotic behaviour of the mean exit time.  In Section 5 we give the proof of Theorem \ref{main-thm}. Finally, in the last section we consider some possible extensions of our results to irregular domains and delayed diffusions.

\section{Preliminaries}
A one-dimensional stochastic process $Y_t$, $t\geq 0$ can be characterized by means of the scale function $S(\cdot)$ and the speed measure $M(\cdot)$ (see for example \cite{KarTay81}). For a one-dimensional diffusion with generator
\begin{equation}
A=\frac{\rho(x)}{2}\frac{d}{d x}\left( a(x) \frac{d}{dx} \right) + b(x)\frac{d}{dx} \label{gen-op-diff}
\end{equation}
we introduce
\begin{equation}
\label{scalePrel}
\mathfrak{s}(x)= S^\prime(x)= \frac{1}{a(x)} \exp\left( -2\int^x \frac{b(\eta)}{\rho(\eta)a(\eta)}d\eta\right)
\end{equation}
and the speed density
\begin{equation}
\mathfrak{m}(x)= \left( \rho(x) a(x) \mathfrak{s}(x)\right)^{-1}
\end{equation}
such that
\begin{equation*}
\mathbb{P}_x (Y_t \in \cdot) = \int_{\cdot} K(t,x,y) \mathfrak{m}(dy)
\end{equation*}
and $\partial_t K(t,x,\cdot) = AK(t,x,\cdot)$. The corresponding integrals are therefore written as
\begin{equation*}
S(x) = \int^x \mathfrak{s}(\eta)d\eta
\end{equation*}
for the scale function $S$ and 
\begin{equation*}
M(x) = \int^x \mathfrak{m}(\eta)d\eta 
\end{equation*}
for the speed measure $M$. We also write
\begin{equation*}
M(l,x]=M(x) - M(l) \quad \textrm{and} \quad S(l,x] = S(x) - S(l)
\end{equation*}
in order to maintain the notation in \cite{KarTay81}. 
Moreover, we notice that, from \eqref{scalePrel},
\begin{equation*}
AS=\frac{\rho(x)}{2}\frac{d}{d x}\left( a(x) \frac{dS}{dx} \right) + b(x)\frac{dS}{dx}= \frac{\rho(x)}{2}\frac{d}{d x}\bigg( a(x) \mathfrak{s}(x) \bigg) + b(x)\mathfrak{s}(x)=0. 
\end{equation*}

Let $B_t$, $t\geq 0$ be a Brownian motion on the real line and denote by
\begin{equation*}
P_tf(x)=\mathbb{E}_x f(B_t)=\mathbb{E}f(x+B_t) = \int_\mathbb{R} f(y)K(t,x,y)dy
\end{equation*}
its semigroup (with $K(t,x,y)=K(t,y-x)$, the heat kernel). Let $B^\alpha_t$, $t\geq 0$ be a skew Brownian motion on the real line with $\alpha\in (0,1)$. A deep discussion about skew Brownian motion can be found in \cite{Lejay06} and the references therein. The reader can also consult the interesting papers  \cite{HarrShep81, ORT13, Ram11}. The skew Brownian motion is the solution to $dB^\alpha_t = \sigma\, dB_t + (2\alpha -1) dL_t^{\{\ell\}}$ where $B$ is a standard Brownian motion and the symmetric local time (at $\ell \in \mathbb{R}$) of $B^\alpha$ is given by
\begin{equation}
L^{\{\ell\}}_t = \frac{1}{2} \left( L^{\ell^-}_t + L^{\ell^+}_t \right)
\end{equation}
where
\begin{equation}
 L^{\ell^-}_t = 2(1-\alpha) L^{\{\ell\}}_t,  \quad  L^{\ell^+}_t =2 \alpha L^{\{\ell\}}_t.
\end{equation}
We refer to $L^{\ell^-}_t$ and $L^{\ell^+}_t$ respectively as left and right local time. Let $u(x,t)=\mathbb{E}_x[f(B^\alpha_t)]$ (see for example \cite{walsh78,ABTWW11} for the explicit representation of the kernel $K^\alpha$ for $B^\alpha$). Then, $u$ is the solution to
\begin{equation}
\partial_t u(x,t) = \frac{\sigma^2}{2}\triangle u(x,t), \quad x \in \mathbb{R}, \; t>0
\end{equation}
subject to
\begin{eqnarray}
u(\ell^-) & = & u(\ell^+)\\
(1-\alpha) u^\prime(\ell^-) & = & \alpha u^\prime (\ell^+)
\end{eqnarray}
with initial datum $f$. The infinitesimal generator $A$ of $B^\alpha$ is given by
\begin{equation}
Au = \frac{\sigma^2}{2 a(x)}\nabla \left(a(x) \nabla u \right)
\end{equation}
where, for $\alpha \in (0,1)$,
\begin{equation}
a(x) = (1-\alpha) \mathbf{1}_{(-\infty,\ell]}(x) + \alpha \mathbf{1}_{(\ell,+\infty)}(x) = c_2 \exp \left( - c_1 \mathbf{1}_{(-\infty, \ell)}(x) \right)
\end{equation}
and $c_1= \ln (\alpha/(1-\alpha))$, $c_2=\alpha$. We evidently have that
\begin{equation*}
Pr\{ B^\alpha_t \leq \ell \} = 1-\alpha \qquad \textrm{and} \qquad  Pr\{ B^\alpha_t > \ell \}=\alpha.
\end{equation*}

\section{The modified skew Brownian motion on the line}
\subsection{The process $B^{(\alpha, \lambda)}_t$}

We focus on the process driven by the generator \eqref{gen-op-diff} with $b(\cdot)=0$ and coefficients
\begin{equation}
a(x) = (1-\alpha) \mathbf{1}_{(0,\ell]}(x) + \alpha \mathbf{1}_{(\ell,\infty)}(x), \quad \alpha \in (0,1)
\end{equation}
and 
\begin{equation}
\rho(x)a(x)=\sigma^2(x) = \mathbf{1}_{(0,\ell]}(x) + \lambda \mathbf{1}_{(\ell,\infty)}(x), \quad \lambda>0.
\end{equation}
Let us consider the interval $(l,r)$  where $r=\ell+\epsilon$ with $\epsilon \geq 0$ and $0< l < \ell < r$. We denote by $B^{(\alpha, \lambda)}_t$, $t>0$ the corresponding one-dimensional diffusion in $(l,r)$. The process $B^{(\alpha, \lambda)}$ moves like a skew Brownian motion  with reflecting barrier at $\ell$ and different variances inside $(l,r)$ (it respectively moves like the standard Brownian motion $B$ on $(l,\ell)$ and the Brownian motion $\sqrt{\lambda} B$ on $(\ell, r)$). The probability law $u$ of $B^{(\alpha, \lambda)}$ is the solution to
\begin{equation}
\partial_t u = A u= \left\lbrace \begin{array}{ll}
\displaystyle \frac{1}{2a(x)}\nabla (a(x) \nabla u) , & \textrm{in } (l,\ell)\\ 
\displaystyle \frac{\lambda}{2a(x)}\nabla (a(x) \nabla u), & \textrm{in } (\ell,r)
\end{array}  \right.
\label{opX}
\end{equation}
subject to
\begin{eqnarray}
 u(l) & = & 0 \label{bc1}\\
 u(r) & = & 0 \label{bc2}\\
  u(\ell^-) & = & u(\ell^+) \label{bc3}\\
(1- \alpha)u^\prime (\ell^-)  &=& \alpha u^\prime(\ell^+) \label{bc4}.
\end{eqnarray}

\begin{remark}
Notice that, if $\lambda = \alpha/(1-\alpha)$ then
\begin{equation*}
\rho(x) = \frac{\sigma^2(x)}{a(x)} = \frac{1}{1-\alpha}
\end{equation*}
and therefore
\begin{equation*}
Au= \frac{1}{2}\nabla( \sigma^2(x) \nabla u).
\end{equation*}
\end{remark}

\subsection{Scale function and Speed measure}

We define the scale function and the speed measure for the process $B^{(\alpha, \lambda)}_t$, $t>0$ previously introduced. For the sake of simplicity we consider $\ell=1$. From
\begin{equation}
\mathfrak{s}(x) = \frac{1}{a(x)}= \frac{1}{1-\alpha} \mathbf{1}_{(0,1]}(x) + \frac{1}{\alpha} \mathbf{1}_{(1,\infty)}(x)
\end{equation}
we get the scale function
\begin{equation}
S(x) = \frac{x}{1-\alpha} \mathbf{1}_{(0,1]}(x) + \left( \frac{1}{1-\alpha} + \frac{x-1}{\alpha} \right) \mathbf{1}_{(1,\infty)}(x)
\end{equation}
and recall that 
\begin{equation}
S(\eta_1, \eta_2]= S(\eta_2) - S(\eta_1).
\end{equation}
From the speed density
\begin{equation}
\mathfrak{m}(x) = \left( \sigma^2(x) \mathfrak{s}(x) \right)^{-1} = \left( \frac{1}{1-\alpha} \mathbf{1}_{(0,1]}(x) + \frac{\lambda}{\alpha} \mathbf{1}_{(1, \infty)}(x) \right)^{-1}
\end{equation}
we get the speed measure
\begin{equation}
M(x) = (1-\alpha) x \mathbf{1}_{(0,1]}(x) + \big( (1-\alpha) + \frac{\alpha}{\lambda} (x-1) \big) \mathbf{1}_{(1, \infty)}(x)
\end{equation}
with
\begin{equation}
M(\eta_1, \eta_2] = M(\eta_2) - M(\eta_1).
\end{equation}
Notice that for such a choice of $S(\cdot)$ and $M(\cdot)$ we obtain continuous functions. Moreover, the operator \eqref{gen-op-diff} can be written as
\begin{equation}
A= \frac{\sigma^2(x) \mathfrak{s}(x)}{2} \frac{d}{dx} \frac{1}{\mathfrak{s}(x)} \frac{d }{dx} = \frac{1}{2} \frac{d}{\mathfrak{m}(x)dx}\frac{d}{\mathfrak{s}(x)dx} = \frac{1}{2} \frac{d}{dM} \frac{d}{dS}  \label{L-true}
\end{equation}
and $(A, D(A))$ where
\begin{align*}
D(A) = \{g, Ag \in C_b\,:\, g \textrm{ satisfies } \eqref{bc1}, \eqref{bc2}, \eqref{bc3}, \eqref{bc4}\}
\end{align*}
is the infinitesimal generator of $B^{(\alpha, \lambda)}_t$, $t>0$, that is a skew diffusion on $(l,r)$ with no drift (that is $b=0$ in \eqref{gen-op-diff}), with transmission condition at $\ell=1$ and variances $1$ and $\lambda$ respectively on $(l,1)$ and $(1,r)$. We recall that $r=1+\epsilon$.

\subsection{Exit time from an interval}
Let $\tau_y = \inf\{ s \geq 0\,:\, B^{(\alpha, \lambda)}_s = y \}$ be the first time the process $B^{(\alpha, \lambda)}$ hits $y$. We also define 
$$ \tau_{(l,r)} := \tau_l \wedge \tau_r.$$ 
The solution to the Cauchy problem \eqref{opX} with initial datum $f$ can be written as follows
\begin{equation}
P^{(\alpha, \lambda)}_t f(x) = \mathbb{E}_x \left[ f(B^{(\alpha, \lambda)}_t) \mathbf{1}_{(t< \tau_{(l,r)})} \right].
\end{equation}
We use the fact that the probability function $\phi_\epsilon$ defined as
\begin{equation}
\phi_\epsilon(x) = \mathbb{P}\left( \tau_r < \tau_l \, | \, B^{(\alpha, \lambda)}_0=x \in (l,r) \right) = \mathbb{P}_x( \tau_r < \tau_l ) \label{prob-func-u}
\end{equation}
solves
\begin{eqnarray}
A \phi_\epsilon & = & 0\\
\phi_\epsilon(l) & = & 0\\
\phi_\epsilon(r) & = & 1
\end{eqnarray}
and, the problem to find $v_\epsilon \in D(A)$ such that $A v_\epsilon = - f$ can be solved by considering the function $\phi_\epsilon$, that is (\cite[page 197]{KarTay81})
\begin{align}
\label{KTsol}
v_\epsilon(x) = &  2\big(1-\phi_\epsilon(x)\big) \Sigma^{-}(x) + 2\phi_\epsilon(x) \Sigma^{+}(x) 
\end{align}
where
\begin{align}
\label{leSigma}
\Sigma^{-}(x) = \int_l^x S(l,\eta] f(\eta) \mathfrak{m}(\eta)d\eta \quad \textrm{and} \quad
\Sigma^{+}(x) = \int_x^r S(\eta, r] f(\eta) \mathfrak{m}(\eta) d\eta.
\end{align}
The probabilistic representation is given by
\begin{align*}
v_\epsilon(x) = \mathbb{E}_x \left[ \int_0^{\tau_{(l,r)}} f(B^{(\alpha, \lambda)}_t) dt \right].
\end{align*}
We recall that $\alpha=\alpha_\epsilon$ and $\lambda=\lambda_\epsilon$. For the sake of simplicity we consider $f \equiv 1$ and the mean exit time 
\begin{equation}
v_\epsilon(x) = \mathbb{E}\left[ \tau_{(l,r)} \, | \, B^{(\alpha, \lambda)}_0=x \in (l,r) \right] = \mathbb{E}_x\left[ \tau_{(l,r)} \right] \label{mean-exit-v}
\end{equation}
solves
\begin{eqnarray}
Av_\epsilon & = & -1, \quad v_\epsilon \in D(A).
\end{eqnarray}

We now obtain the explicit representation of $\phi_\epsilon$ and therefore of $v_\epsilon$. For the probability function \eqref{prob-func-u}, we get that (\cite{KarTay81}):
\begin{itemize}
\item[i)] for $(l,r) \subset (0,1)$ or $(l,r) \subset (1,\infty)$, 
\begin{equation}
\phi_\epsilon(x) = \frac{S(l,x]}{S(l,r]} = \frac{x-l}{r-l}, \quad x \in (l,r)
\end{equation}
that is, $B^{(\alpha, \lambda)}$ moves like a Brownian motion in each set;
\item[ii)] for $l <1$ and $r>1$ (that is case here, in our analysis),
\begin{align}
\phi_\epsilon(x) = & \frac{S(l,x]}{S(l,r]} = \frac{\alpha x \mathbf{1}_{[0,1)}(x) + \big( \alpha + (1-\alpha) (x-1)\big) \mathbf{1}_{(1, 1+\epsilon]}(x) - \alpha l}{(1-\alpha) (r-1) - \alpha (l-1)}, \quad x \in (l,r)\\
= &  \left\lbrace \begin{array}{ll}
\displaystyle  \frac{\alpha x - \alpha l}{(1-\alpha) (r-1) - \alpha (l-1)}, & x \in (l,1] \\
\displaystyle  \frac{(1-\alpha)(x-1) - \alpha (l-1) }{(1-\alpha) (r-1) - \alpha (l-1)}, & x \in (1,r)
\end{array} \right . .
\end{align}
\end{itemize}
For the mean exit time \eqref{mean-exit-v} we consider only the case ii), $0<l<1$ and $r>1$. 

\begin{proposition}
We have that
\begin{align}
v_\epsilon(x) = & \left\lbrace \begin{array}{ll}
\displaystyle 2\left(  \frac{I_1}{I_2} -  \frac{1}{\alpha}x \right) (\alpha x - \alpha l) + x^2 + l^2 - 2lx, & x \in (l,1]\\
\displaystyle 2\left( \frac{I_1}{I_2} - \frac{1}{\alpha} - \frac{(x-1)}{\lambda (1-\alpha)}  \right) \bigg( (1-\alpha)(x-1) -\alpha (l-1) \bigg) + \\
\displaystyle + 1 + l^2 + 2\frac{\alpha}{1-\alpha}\frac{(x-1)}{\lambda} - 2l - \frac{\alpha}{1-\alpha}\frac{2l}{\lambda} (x-1) + \frac{(x-1)^2}{\lambda}, & x \in (1,r)
\end{array} \right . \label{mean-exit-v-law}
\end{align}
where 
\begin{align*}
I_1= \frac{1-\alpha}{\alpha} (r-1) +\frac{(r-1)^2}{2\lambda} + \frac{(1-l^2)}{2}, \quad I_2=(1-\alpha)(r-1) - \alpha(l-1). 
\end{align*}
\end{proposition}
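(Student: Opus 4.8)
The plan is to evaluate the Karlin--Taylor representation \eqref{KTsol} for $v_\epsilon$ explicitly in the regime $0<l<1<r$, feeding in the piecewise scale function $S$ and speed density $\mathfrak{m}$ computed in the previous subsection together with the closed form of $\phi_\epsilon$ from case ii). Since we take $f\equiv 1$, the two ingredients reduce to the elementary integrals $\Sigma^{-}(x)=\int_l^x S(l,\eta]\,\mathfrak{m}(\eta)\,d\eta$ and $\Sigma^{+}(x)=\int_x^r S(\eta,r]\,\mathfrak{m}(\eta)\,d\eta$ of \eqref{leSigma}. The only subtlety is that $S(l,\cdot]$, $S(\cdot,r]$ and $\mathfrak{m}(\cdot)$ all change form at the interface $\eta=1$, so I would split each integral there and treat the two ranges $x\in(l,1]$ and $x\in(1,r)$ separately.

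Concretely, on $(l,1]$ one has $S(l,\eta]=(\eta-l)/(1-\alpha)$ and $\mathfrak{m}(\eta)=1-\alpha$, so the product collapses to the pure polynomial $\eta-l$ and $2\Sigma^{-}(x)=(x-l)^2$; on $(1,r)$ one has $S(\eta,r]=(r-\eta)/\alpha$ and $\mathfrak{m}(\eta)=\alpha/\lambda$, so the product collapses to $(r-\eta)/\lambda$. Carrying out the four elementary integrations (two for each range of $x$, each split at $\eta=1$) yields closed forms for $\Sigma^{\pm}(x)$ that are at most quadratic in $x$, with coefficients built from $\alpha$, $\lambda$, $l$ and $r-1=\epsilon$.

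The cleanest way to assemble the answer is to rewrite \eqref{KTsol} as $v_\epsilon=2\Sigma^{-}+2\phi_\epsilon\,(\Sigma^{+}-\Sigma^{-})$. The term $2\Sigma^{-}$ already produces the non-bracketed part of \eqref{mean-exit-v-law}, namely $(x-l)^2=x^2+l^2-2lx$ on the first branch and the $\lambda$-weighted expression $(1-l)^2+\tfrac{2\alpha(1-l)(x-1)}{\lambda(1-\alpha)}+\tfrac{(x-1)^2}{\lambda}$ on the second. For the remaining term I would substitute $\phi_\epsilon=(\,\cdot\,)/I_2$ with $I_2=(1-\alpha)(r-1)-\alpha(l-1)=\alpha(1-\alpha)\,S(l,r]$, which is exactly the common denominator of $\phi_\epsilon$. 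The key algebraic identity to check is then that the difference factors as $\Sigma^{+}(x)-\Sigma^{-}(x)=I_1-\tfrac{x}{\alpha}I_2$ on $(l,1]$ and as $\Sigma^{+}(x)-\Sigma^{-}(x)=I_1-\tfrac{I_2}{\alpha}-\tfrac{(x-1)}{\lambda(1-\alpha)}I_2$ on $(1,r)$, with one and the same constant $I_1=\tfrac{1-\alpha}{\alpha}(r-1)+\tfrac{(r-1)^2}{2\lambda}+\tfrac{1-l^2}{2}$; once this is verified, dividing by $I_2$ gives precisely the bracketed factors of \eqref{mean-exit-v-law}.

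I expect the main obstacle to be organizational rather than conceptual: keeping the interface splitting and the $\alpha$/$\lambda$ bookkeeping straight so that the two branches glue into the single constant $I_1$ and the single denominator $I_2$ (the nontrivial cancellation is that $\tfrac{1-l^2}{2}-(1-l)=-\tfrac{(1-l)^2}{2}$, which is what forces $I_1-\tfrac{I_2}{\alpha}=\tfrac{(r-1)^2}{2\lambda}-\tfrac{(1-l)^2}{2}$ on the second branch). As built-in consistency checks I would verify that the resulting formula satisfies the Dirichlet conditions $v_\epsilon(l)=v_\epsilon(r)=0$ and is continuous at the interface $\ell=1$ (both branches agree there, equal to $2(\tfrac{I_1}{I_2}-\tfrac1\alpha)\alpha(1-l)+(1-l)^2$), and, if a fully independent confirmation is wanted, that $Av_\epsilon=-1$ together with the transmission condition \eqref{bc4} pins the answer down uniquely.
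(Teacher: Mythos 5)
Your proposal is correct and takes essentially the same route as the paper: both evaluate the Karlin--Taylor representation \eqref{KTsol} in the rearranged form $v_\epsilon = 2\Sigma^{-} + 2\phi_\epsilon\,(\Sigma^{+}-\Sigma^{-})$ (the paper's \eqref{KTsolProof}), compute the integrals piecewise across the interface at $\ell=1$, and identify the constants $I_1$, $I_2$; your two branch identities $\Sigma^{+}-\Sigma^{-}=I_1-\tfrac{x}{\alpha}I_2$ and $\Sigma^{+}-\Sigma^{-}=I_1-\tfrac{I_2}{\alpha}-\tfrac{(x-1)}{\lambda(1-\alpha)}I_2$ are exactly the paper's single formula $\Sigma(x,r]-\Sigma(l,x]=I_1-\tfrac{I_2}{\alpha(1-\alpha)}M(x)$ written out on $(l,1]$ and $(1,r)$. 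The only immaterial difference is bookkeeping: the paper routes the computation through $S(r)M(x,r]-\int S\,\mathfrak{m}$ and the speed measure $M(x)$, whereas you integrate the products $S(l,\eta]\,\mathfrak{m}(\eta)$ directly.
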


\begin{proof}
Consider \eqref{leSigma} with $f=1$, then
\begin{align*}
\Sigma^{+}=\Sigma(x,r] = & \int_x^r S(\eta, r] \mathfrak{m}(\eta)d\eta =  S(r)M(x,r]  - \int_x^r S(\eta) \mathfrak{m}(\eta)d\eta 
\end{align*}
and
\begin{align*}
\Sigma^{-}=\Sigma(l,x] = & \int_l^x S(l, \eta] \mathfrak{m}(\eta) d\eta = \int_l^x S(\eta) \mathfrak{m}(\eta) d\eta - S(l) M(l,x] 
\end{align*}
where the last integral can be explicitly written by considering that
\begin{align*}
\int_l^x S(\eta) \mathfrak{m}(\eta) d\eta = & \int_l^x S(\eta) \mathfrak{m}(\eta) d\eta \mathbf{1}_{(0,1]}(x) + \left( \int_l^1 S(\eta) \mathfrak{m}(\eta) d\eta + \int_1^x S(\eta) \mathfrak{m}(\eta) d\eta \right) \mathbf{1}_{(1, \infty)}(x)\\
= & \frac{(x^2 - l^2)}{2} \mathbf{1}_{(0,1]}(x) + \left( \frac{(1-l^2)}{2} + \frac{\alpha}{1-\alpha}\frac{(x-1)}{\lambda} + \frac{(x-1)^2}{2\lambda}  \right) \mathbf{1}_{(1, \infty)}(x)
\end{align*}
and 
$$S(l) M(l,x]  = -l^2 + lx\mathbf{1}_{(0,1]}(x) + \left( l+\frac{\alpha}{1-\alpha}\frac{l}{\lambda}(x-1) \right) \mathbf{1}_{(1, \infty)}(x).$$
Therefore, we have that
\begin{align*}
\Sigma(l,x] = & \left( \frac{x^2+l^2}{2} - lx \right) \mathbf{1}_{(0,1]}(x) \\
&  + \left( \frac{(1+l^2)}{2} + \frac{\alpha}{1-\alpha}\frac{(x-1)}{\lambda} - l  - \frac{\alpha}{1-\alpha}\frac{l}{\lambda}(x-1)  + \frac{(x-1)^2}{2\lambda} \right)  \mathbf{1}_{(1, \infty)}(x).
\end{align*}
Rewrite \eqref{KTsol} as follows
\begin{align}
\label{KTsolProof}
v_\epsilon(x) = & 2\phi_\epsilon(x) \bigg( \Sigma(x,r] - \Sigma(l,x] \bigg) + 2 \Sigma(l,x]
\end{align}
where
\begin{align*}
& \Sigma(x,r] - \Sigma(l,x] =  \\
= & S(r) M(x,r]  - \int_l^r S(\eta) \mathfrak{m}(\eta)d\eta + S(l) M(l,x] \\
= & S(r) \left( M(r) - M(x)\right)  - \frac{(1-l^2)}{2} - \frac{\alpha}{1-\alpha}\frac{(r-1)}{\lambda} -\frac{(r-1)^2}{2\lambda} + S(l) \left( M(x) - M(l) \right) \\
= & S(r) M(r) - S(l)M(l) - \left( S(r) - S(l) \right) M(x)  - \frac{(1-l^2)}{2} - \frac{\alpha}{1-\alpha}\frac{(r-1)}{\lambda} - \frac{(r-1)^2}{2\lambda}\\
= & I_1  -  \frac{I_2}{\alpha(1-\alpha)} M(x)
\end{align*}
with $I_2=(1-\alpha)(r-1) - \alpha (l-1)$, $S(l)M(l)=l^2$ and
\begin{align*}
I_1 = & S(r)M(r) - l^2- \frac{(1-l^2)}{2} - \frac{\alpha}{1-\alpha}\frac{(r-1)}{\lambda} - \frac{(r-1)^2}{2\lambda}\\
 = & \frac{1-\alpha}{\alpha} (r-1) +\frac{(r-1)^2}{2\lambda} + \frac{(1-l^2)}{2} 
\end{align*}
By collecting all pieces together, we obtain
\begin{align*}
v(x) = & \left\lbrace \begin{array}{ll}
\displaystyle 2\phi_\epsilon(x) \left(  I_1 -  \frac{I_2}{\alpha}x \right) + x^2 + l^2 - 2lx, & x \in (l,1]\\
\displaystyle 2\phi_\epsilon(x) \left( I_1 - \frac{I_2}{\alpha(1-\alpha)} \big( (1-\alpha) + \frac{\alpha}{\lambda}(x-1) \big) \right) + \\
\displaystyle + 1 + l^2 + 2\frac{\alpha}{1-\alpha}\frac{(x-1)}{\lambda} - 2l - \frac{\alpha}{1-\alpha}\frac{2l}{\lambda} (x-1) + \frac{(x-1)^2}{\lambda}, & x \in (1,r)
\end{array} \right .\\
= & \left\lbrace \begin{array}{ll}
\displaystyle 2\left(  \frac{I_1}{I_2} -  \frac{1}{\alpha}x \right) (\alpha x - \alpha l) + x^2 + l^2 - 2lx, & x \in (l,1]\\
\displaystyle 2\left( \frac{I_1}{I_2} - \frac{1}{\alpha} - \frac{1}{1-\alpha}\frac{(x-1)}{\lambda}  \right) \bigg( (1-\alpha)(x-1) -\alpha (l-1) \bigg) + \\
\displaystyle + 1 + l^2 + 2\frac{\alpha}{1-\alpha}\frac{(x-1)}{\lambda} - 2l - \frac{\alpha}{1-\alpha}\frac{2l}{\lambda} (x-1) + \frac{(x-1)^2}{\lambda}, & x \in (1,r)
\end{array} \right .
\end{align*}
which is the claim.
\end{proof}

\begin{remark}
For $l=0$ (and $r=1+\epsilon$), formula \eqref{mean-exit-v-law} becomes
\begin{align}
\label{v-eps-1sec-simple}
v_\epsilon(x) = & \left\lbrace \begin{array}{ll}
\displaystyle 2\alpha\frac{I_1}{I_2}x -  x^2, & x \in (0,1]\\
\displaystyle 2(1-\alpha)\frac{I_1}{I_2}(x-1) - 2\frac{1-\alpha}{\alpha}(x-1) -  \frac{(x-1)^2}{\lambda} + \\
\displaystyle  + 2\alpha \frac{I_1}{I_2} - 1 , & x \in (1,1+\epsilon)
\end{array} \right . 
\end{align}
where
\begin{align*}
I_1 =  \frac{1-\alpha}{\alpha} \epsilon +\frac{\epsilon^2}{2\lambda} + \frac{1}{2} , \quad I_2=(1-\alpha)\epsilon + \alpha.
\end{align*}
We get that $v_\epsilon \in D(A)$ satisfies $Av_\epsilon = -1$.
%\begin{eqnarray}
%Lv_\epsilon &=& -1\\
%v_\epsilon(0)&=&0\\
%v_\epsilon(1+\epsilon)&=&0\\
%v_{\epsilon}(1^-)&=&v_{\epsilon}(1^+)\\
%(1-\alpha)v^\prime_\epsilon(1^-) &= &\alpha v^\prime_\epsilon(1^+) 
%\end{eqnarray}
\end{remark}

\begin{remark}
\label{remarl-result-1dim}
Under \eqref{main-condition}, $v_\epsilon$  given in \eqref{v-eps-1sec-simple} converges pointwise in $\Omega=(0,1)$ to 
$$v(x) = C x  - x^2$$
where
\begin{align*}
C= \left\lbrace 
\begin{array}{ll}
\displaystyle 2, & \textrm{if } \alpha/\epsilon\to 0\\
\displaystyle \frac{2+G}{1+G}, & \textrm{if }  \alpha/\epsilon \to G \in (0,\infty)\\ 
\displaystyle 1, & \textrm{if }  \alpha/\epsilon \to \infty
\end{array}
\right . .
\end{align*}
We can immediately verify that $v$ satisfies Neumann, Robin and Dirichlet boundary condition at $\ell=1$ depending on the asymptotic behaviour of $\alpha/\epsilon$. Indee, we have that
\begin{align*}
\lim_{\epsilon \to 0} 2\alpha \frac{I_1}{I_2} = 2 \lim_{\epsilon \to \infty} \frac{1+ \frac{\epsilon}{2\lambda}\frac{\alpha}{1-\alpha} + \frac{1}{2}\frac{\alpha}{(1-\alpha)\epsilon}}{1 + \frac{\alpha}{(1-\alpha)\epsilon}}= \left\lbrace 
\begin{array}{ll}
\displaystyle 2, & \alpha/\epsilon\to 0\\
\displaystyle \frac{2+G}{1+G}, & \alpha/\epsilon \to G \in (0,\infty)\\ 
\displaystyle 1, & \alpha/\epsilon \to \infty
\end{array}
\right . .
\end{align*}
\end{remark}

\section{The modified skew Brownian motion on the plane}

\subsection{The process $R^{(\alpha, \lambda)}_t$}

Let $\Theta(t)$ be a Brownian motion on $\mathbb{S}^1_r=\{x \in \mathbb{R}^2\,:\, |x|=r \}$ where the radius $r=R(\cdot)$ is the $2$-dimensional Bessel process. Then, $\mathbf{B} \in \mathbb{R}^2$ can be written by considering the skew-product representation $(R, \Theta)$  where (see \cite[p. 269]{ItoMcK})
\begin{equation}
R(t), \; t>0 \quad \textrm{and} \quad \Theta\left( \int_0^t \left[ R(s) \right]^{-2}ds \right), \; t>0. \label{skew-rep}
\end{equation} 

Assume that $\mathbf{B}$ is starting at $x\in \mathcal{B}_\ell$, the disc of radius $\ell>0$. Let us consider the additive functional
\begin{equation}
\mathfrak{f}(t) = meas\{ 0\leq s\leq t\,:\, \mathbf{B}_s \in \mathcal{B}_\ell\}, \quad t>0
\end{equation}
which is the time the Brownian motion spends on the disc up to time $t$. The Bessel process running with the new clock $\mathfrak{f}^{-1}$, that is $R(\mathfrak{f}^{-1}(t))$, $t>0$, is the Bessel process reflecting at $\ell$. The Brownian motion in the disc $\mathcal{B}_\ell \subset \mathbb{R}^2$ reflecting at $\partial \mathcal{B}_\ell$ is identical in law to the skew representation above running with the clock $\mathfrak{f}^{-1}$. The spherical part $\Theta(\int_0^{\mathfrak{f}^{-1}(t)} (R(s))^{-2}d\mathfrak{f})$ is identical in law (\cite[p. 272]{ItoMcK}) to the  spherical part in  \eqref{skew-rep} with no time-change, that is the standard Brownian angle  sampled on the support of $\mathfrak{f}$ up to time $\mathfrak{f}^{-1}$. Therefore, due to the isotropic nature of the motion, the Brownian motion in the disc $\mathcal{B}_\ell$ of radius $\ell>0$ and reflecting at $\ell$ can be studied by considering only its time-changed radial part, that is $R(\mathfrak{f}^{-1}(t))$, $t>0$.

The Bessel process with parameter $\nu \in \mathbb{R}$ is a diffusion  with state space $(0,\infty)$ associated with the infinitesimal generator
\begin{equation}
\frac{d^2}{dx^2} + \frac{\nu -1}{x}\frac{d}{dx}.
\end{equation}
For $\nu=2$, the generator above can be written as
\begin{equation}
\label{gen-Bess-primadef}
\frac{1}{x}\frac{d}{dx}\left( x \frac{d}{dx}\right)
\end{equation}
and we denote by $R(t)=|\mathbf{B}(t)|$, $t>0$  the corresponding $2$-dimensional Bessel process which is the solution to  
\begin{equation}
dR(t) = \frac{1}{2}\frac{dt}{R(t)} + dB(t).
\end{equation}
If $R(t)$, $t>0$ starts from $0$, then it lives $0$ immediately, never to return. If the process starts from $x>0$, then it never hits $0$. If we consider the additive functional
\begin{equation}
\overline{\mathfrak{f}}(t) = meas\{ 0\leq s\leq t\,:\, \mathbf{B}_s \notin \mathcal{B}_\ell\}, \quad t>0
\end{equation}
we can construct a skew motion by exploiting the same reasoning as above. In particular, $\mathbf{B}$ time-changed by the inverse $\overline{\mathfrak{f}}^{-1}$ is a Brownian motion moving out the disc and reflecting on the boundary $\partial \mathcal{B}_\ell$ (from outside). By taking the $\alpha$ portion of trajectories out the disc and the $(1-\alpha)$ portion of trajectories in the disc, we construct a process which is identical in law to the skew Brownian motion on the plane with transmission condition on $\partial \mathcal{B}_\ell$. Since the skew-product allow us to consider only the time change of radial part $R$ (the spherical part is identical in law to the time-changed spherical part), we can study the skew Bessel process instead of the skew Brownian motion. Skew Bessel process has been considered for example in \cite{Blei12,DGS06}.

We formalize the problem and clarify the aspects mentioned above. Let $\Omega_\ell$ be the the disc of radius $\ell$ and let $\Omega_r$ be the disc or radius $r=\ell +\epsilon$, so that $\Omega_\ell \subset \Omega_r$. Moreover, we denote by $\Sigma_\epsilon = \Omega_r \setminus \overline{\Omega_\ell}$ the $\epsilon$-neighbourhood of $\partial \Omega_\ell$. Let $\Omega_l \subset \Omega_\ell$ be a disc of radius $0<l<\ell$. Our aim is to study the Brownian motion on $\Omega_r$ with Dirichlet conditions on $\partial \Omega_l$ and $\partial \Omega_r$ and transmission condition on $\partial \Omega_\ell$. Thus, we study a killed skew Brownian motion. Moreover, we require that the process exhibits different variances, we have a standard Brownian motion in $\Omega_\ell \setminus \overline{\Omega_l}$ and a Brownian motion with variance $\lambda t$ in $\Sigma_\epsilon$. The transmission condition is written as follows
\begin{eqnarray}
u|_{\partial \Omega^{-}_\ell} &=& u|_{\partial \Omega^{+}_\ell} \qquad \textrm{continuity on the boundary}, \label{BC1}\\
(1-\alpha) \partial_{\bf n} u |_{\partial \Omega^{-}_\ell} &=& \alpha \partial_{\bf n} u |_{\partial \Omega^{+}_\ell} \qquad \textrm{partial reflection}, \label{BC2}
\end{eqnarray}
where $\partial_{\bf n}u$ is the normal derivative of $u$. The infinitesimal generator $(A, D(A))$ is therefore given by
\begin{equation}
\label{A-form}
Au= \left\lbrace
\begin{array}{ll}
\displaystyle \frac{1}{2}\Delta u & \textrm{on } \Omega_\ell \setminus \overline{\Omega_l}\\
\displaystyle \frac{\lambda}{2}\Delta u & \textrm{on } \Omega_r \setminus \overline{\Omega_\ell}
\end{array}
\right.
\end{equation}
with
\begin{align}
\label{dom-A-form}
D(A) =\{g, Ag \in C_b\,:\, g|_{\partial \Omega_l} = g|_{\partial \Omega_r}=0,\, f\textrm{ satisfies } \eqref{BC1},\, \eqref{BC2}\}
\end{align} 
As in the previous section we assume that $\alpha=\alpha_\epsilon$, $\lambda=\lambda_\epsilon$ are parameters depending on $\epsilon$. We are interested in the asymptotic analysis (as $\epsilon\to 0$) for the solution 
\begin{align}
u \in D(A) \quad s.c.\ A u = -f
\end{align}
on the collapsing domain $\Omega_r$ under condition \eqref{main-condition}. Denote by $\tau_{l,r}$ the first exit time from $\Omega_r \setminus \overline{\Omega_l}$ of the planar Brownian motion $\mathbf{B}$ started at $x \in \Omega_r \setminus \overline{\Omega_l}$. Killing the process by $\tau_{l,r}$ we get a motion on $\Omega_r \setminus \overline{\Omega_l}$. Since we can find a time change such that $\mathbf{B}_{\mathfrak{f}^{-1}}$ is a skew Brownian motion with reflecting barrier on $\partial \Omega_\ell$, we consider the time-changed skew-product representation. The skew-product representation $(R_{\mathfrak{f}^{-1}}, \Theta_{\mathfrak{f}^{-1}})$ of $\mathbf{B}_{\mathfrak{f}^{-1}}$ can be replaced, for our purposes, by $(R_{\mathfrak{f}^{-1}}, \Theta_t)$ and we are allowed to study only the time-changed Bessel process. With \eqref{gen-Bess-primadef} in mind, the generator of the Bessel process we are interested in can be rewritten as follows
\begin{equation}
\label{gen-Bess-secondadef} 
Au = \frac{\sigma^2(x)}{2 a(x)} \nabla \left( a(x) \nabla u \right) = \left\lbrace
\begin{array}{ll}
\displaystyle \frac{1}{2} \left( \frac{d^2}{d x^2} + \frac{1}{x}\frac{d}{dx} \right)u & \textrm{on } (l, \ell)\\
\displaystyle \frac{\lambda}{2} \left( \frac{d^2}{d x^2} + \frac{1}{x}\frac{d}{dx} \right)u & \textrm{on } (\ell, r)
\end{array}
\right .
\end{equation} 
as will be discussed below.

\subsubsection{Scale function and Speed measure}

We consider the Bessel process started at $0$ with no drift which is characterized by the functions
\begin{equation}
\mathfrak{s}(x) = x^{-1} \quad \textrm{and} \quad \mathfrak{m}(x) = x, \qquad x \in (0,\infty).
\end{equation}
We notice that $\mathfrak{m}(\{0\})=0$ and $\{0\}$ is an entrance non-exit boundary. The scale function turns out to be
\begin{equation}
S(x) = \ln x.
\end{equation}
Our scope here is to consider the skew Bessel process started at zero and reflecting at $\ell>0$ with different variances before and after $\ell$. The coefficients are given by
\begin{equation}
\sigma^2(x) = \mathbf{1}_{(0,\ell]}(x) + \lambda \mathbf{1}_{(\ell, \infty)}(x) , \quad \lambda>0
\end{equation}
and 
\begin{equation}
a(x) = (1-\alpha) x \mathbf{1}_{(0,\ell]}(x) + \alpha x \mathbf{1}_{(\ell, \infty)}(x) .
\end{equation}
Thus, we obtain
\begin{equation}
\mathfrak{s}(x) = \frac{x^{-1}}{1-\alpha}\mathbf{1}_{(0,\ell]}(x) + \frac{x^{-1}}{\alpha}\mathbf{1}_{(\ell, \infty)}(x)
\end{equation}
and
\begin{equation}
\mathfrak{m}(x)= \left( \sigma^2(x)\mathfrak{s}(x) \right)^{-1} = (1-\alpha) x \mathbf{1}_{(0,\ell]}(x) + \frac{\alpha}{\lambda} x \mathbf{1}_{(\ell, \infty)}(x)
\end{equation}
with scale function
\begin{equation}
S(x) = \frac{\ln x}{1-\alpha} \mathbf{1}_{(0, \ell]}(x) + \left( \frac{\ln \ell}{1-\alpha} + \frac{\ln x/\ell}{\alpha}\right)\mathbf{1}_{(\ell, \infty)}(x)
\end{equation}
and speed measure
\begin{equation}
M(x) = \frac{1-\alpha}{2}x^2 \mathbf{1}_{(0,\ell]}(x) + \left( \frac{1-\alpha}{2}\ell^2 + \frac{\alpha}{2\lambda}(x^2 -\ell^2) \right) \mathbf{1}_{(\ell, \infty)}(x).
\end{equation}
The infinitesimal generator \eqref{gen-op-diff} of the modified skew Bessel process can be therefore rewritten as
\begin{equation}
L=\frac{1}{2}\frac{d}{dM} \frac{d}{dS}
\end{equation}
with $S$ and $M$ as above and (\cite[Theorem VII.3.12]{ RevYor99}) $Lg(\ell)=Lg(\ell^+)=Lg(\ell^-)$ with
\begin{equation}
D(L) = \left\lbrace g, Ag \in C_b([0,\infty))\,:\, \frac{dg^+}{dS}(0^+)=0,\, (1-\alpha)g^\prime(\ell^-) = \alpha g^\prime(\ell^+)  \right\rbrace
\end{equation}
where
\begin{equation*}
\frac{dg^+}{dS}(x) = \lim_{h \downarrow 0} \frac{g(x+h) - g(x)}{S(x+h)-S(x)}
\end{equation*}
is the $S$-derivative of $g$. Thus, we have the Bessel process on the positive real line with skew reflection at $\ell$. We denote by $R^{(\alpha, \lambda)}_t$, $t\geq 0$ the Bessel process on $(l,r)$ with generator $(A, D(A))$ where $A$ is given in \eqref{gen-Bess-secondadef} and 
\begin{equation}
D(A) = D(L) \cap \left\lbrace g \in C_b\, :\, g \textrm{ satisfies } g(l)=g(r)=0, \, g(\ell^-)=g(\ell^+) \right\rbrace.
\end{equation}

\subsection{Exit time from a disc}

We study the exit time of $B^{(\alpha, \lambda)}$ by considering the exit time of $R^{(\alpha, \lambda)}$.\\

Let us first consider the probability
\begin{equation}
\phi_\epsilon(x) = \mathbb{P}\left( \tau_r < \tau_l \, | \, R^{(\alpha, \lambda)}_0=x \in (l,r)  \right) = \mathbb{P}_x( \tau_r < \tau_l) \label{prob-func-u-R}
\end{equation}
where $\tau_y = \inf\{ s > 0\,:\, R^{(\alpha, \lambda)}_s = y \}$. For $0< l < \ell < r < \infty$, we obtain that
\begin{equation}
\phi_\epsilon(x) = \frac{S(x) - S(l)}{S(r) - S(l)} = \frac{1}{S(r) - S(l)} \left\lbrace \begin{array}{ll}
\displaystyle \frac{\ln x/l}{1-\alpha} , & x \in (l,\ell]\\
\displaystyle \frac{\ln \ell/l}{1-\alpha} + \frac{\ln x/\ell}{\alpha}, & x \in (\ell, r)
\end{array} \right .
\end{equation}
solves
\begin{eqnarray*}
A \phi_\epsilon &=& 0,\\
\phi_\epsilon(l)&=&0,\\
\phi_\epsilon(r)&=&1,\\
\phi_\epsilon(\ell^-)&=&\phi_\epsilon(\ell^+),\\
(1-\alpha)\phi_\epsilon^\prime (\ell^-)&=&\alpha \phi_\epsilon^\prime(\ell^+).
\end{eqnarray*}

Our aim is to find 
\begin{align}
\label{solBessTime}
v_\epsilon \in D(A) \; s.c.\ A v_\epsilon =-1.
\end{align}
The probabilistic representation is given in terms of the mean exit time
\begin{equation}
v_\epsilon(x) = \mathbb{E}\left[ \tau_{(l,r)} \, | \, R^{(\alpha, \lambda)}_0=x \in (l,r) \right] = \mathbb{E}_x \left[ \tau_{(l,r)} \right].  \label{mean-exit-v-R}
\end{equation}
Set
\begin{equation}
\Pi^*(r) = \left( \frac{1-\alpha}{2\alpha} - \frac{1}{2\lambda}\right) \ell^2 \ln \frac{r}{\ell} + \frac{\ell^2 -l^2}{4} + \frac{r^2 - \ell^2}{4\lambda}
\end{equation}
and
\begin{align*}
 \Pi_3(r)=\frac{\ln \ell/l}{1-\alpha} + \frac{\ln r/\ell}{\alpha}.
\end{align*}
We present the following explicit result.
\begin{proposition}
The solution to \eqref{solBessTime} is written as
\begin{align}
\label{v-epsilon-explicit}
v_\epsilon(x) = \left\lbrace \begin{array}{ll}
\displaystyle \frac{2\Pi^*(r)}{(1-\alpha) \Pi_3(r)} \ln x/l - \frac{x^2 -l^2}{2}, & x \in (l, \ell]\\
\displaystyle  \frac{2 \Pi^*(r)}{(1-\alpha)\Pi_3(r)} \ln \ell/l  - \frac{\ell^2 -l^2}{2} + \left( \frac{2\Pi^*(r)}{\alpha \Pi_3(r)} + \frac{\ell^2}{\lambda} - \frac{1-\alpha}{\alpha} \ell^2\right) \ln x/\ell - \frac{x^2 - \ell^2}{2\lambda}, & x \in (\ell, r)
\end{array} \right.
\end{align}
\end{proposition}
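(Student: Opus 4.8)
The plan is to mirror, for the Bessel generator, the computation already carried out for the one-dimensional Proposition of Section 3, now feeding in the explicit scale function $S$ and speed density $\mathfrak{m}$ recorded above for $R^{(\alpha,\lambda)}$. Taking $f\equiv 1$ in \eqref{KTsol} and rearranging exactly as in the earlier proof, I would start from
\begin{equation*}
v_\epsilon(x) = 2\phi_\epsilon(x)\big(\Sigma^+(x) - \Sigma^-(x)\big) + 2\Sigma^-(x),
\end{equation*}
with $\Sigma^\pm$ as in \eqref{leSigma}. Integrating by parts as before yields $\Sigma^-(x)=\int_l^x S(\eta)\mathfrak{m}(\eta)\,d\eta - S(l)M(l,x]$ and $\Sigma^+(x)=S(r)M(x,r]-\int_x^r S(\eta)\mathfrak{m}(\eta)\,d\eta$, so the only genuinely new ingredient is the evaluation of the primitive $\int S(\eta)\mathfrak{m}(\eta)\,d\eta$ and of $M(x)$ for the Bessel data.

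Next I would compute these primitives piecewise across the interface $\ell$. On $(l,\ell]$ one has $\mathfrak{m}(\eta)=(1-\alpha)\eta$ and $S(\eta)=\ln\eta/(1-\alpha)$, so $\int_l^x S\,\mathfrak{m}$ reduces to $\int_l^x \eta\ln(\eta/l)\,d\eta$, which integrates by parts to a combination of an $\eta^2\ln(\eta/l)$ term and an $\eta^2$ term; on $(\ell,r)$ the coefficients $\tfrac{\alpha}{\lambda}\eta$ and $\tfrac{\ln\ell}{1-\alpha}+\tfrac{\ln(\eta/\ell)}{\alpha}$ produce the analogous logarithmic primitives carrying the factor $1/\lambda$. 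Collecting $S(r)M(r)-S(l)M(l)-\int_l^r S\,\mathfrak{m}$ I expect to obtain precisely
\begin{equation*}
\Sigma^+(x) - \Sigma^-(x) = \Pi^*(r) - \Pi_3(r)\,M(x),
\end{equation*}
where $\Pi_3(r)=S(r)-S(l)=S(l,r]$ is the denominator already appearing in \eqref{prob-func-u-R} and $\Pi^*(r)$ is the stated constant. Substituting $\phi_\epsilon(x)=(S(x)-S(l))/\Pi_3(r)$ together with the explicit $M(x)$, the product $2\phi_\epsilon(x)\Pi_3(r)M(x)$ cancels the $x^2\ln(x/l)$-type contribution coming from $2\Sigma^-(x)$, leaving on each of the two regions the clean expression \eqref{v-epsilon-explicit}.

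Finally I would verify that the resulting $v_\epsilon$ lies in $D(A)$ and solves \eqref{solBessTime}. The Dirichlet conditions $v_\epsilon(l)=v_\epsilon(r)=0$ are immediate from $\phi_\epsilon(l)=0$, $\phi_\epsilon(r)=1$ and $\Sigma^-(l)=\Sigma^+(r)=0$, while continuity at $\ell$ and the skew flux condition \eqref{BC2} are inherited from the continuity of $S$ and $M$ used to build the solution, as guaranteed by the general construction. The main obstacle is purely computational bookkeeping: since the scale function is logarithmic and the speed density linear, the primitives generate several $\eta^2\ln(\eta/l)$ and $\ln(\eta/\ell)$ terms, and the delicate point is checking that, after matching the two branches at $\ell$, all logarithmic cross-terms recombine into the single constant $\Pi^*(r)$ and that the $1/\lambda$ contributions land exactly in the $(\ell,r)$ branch as displayed.
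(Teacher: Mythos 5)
Your proposal is correct and follows essentially the same route as the paper's proof: both start from \eqref{KTsol} rewritten as $v_\epsilon = 2\phi_\epsilon(\Sigma^+-\Sigma^-)+2\Sigma^-$, reduce $\Sigma^+-\Sigma^-$ to the constant $S(r)M(r)-S(l)M(l)-\int_l^r S\,\mathfrak{m}\,d\eta$ (which the paper organizes as $\Pi_1+\Pi_2-\Pi_4-\tfrac{l^2}{2}\ln l=\Pi^*(r)$) minus $\Pi_3(r)M(x)$, and then exploit the cancellation of the $x^2\ln$-type terms between $-2\phi_\epsilon\Pi_3(r)M(x)$ and $2\Sigma^-(x)$ on each branch. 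The key identity you isolate, $\Sigma^+(x)-\Sigma^-(x)=\Pi^*(r)-\Pi_3(r)M(x)$, is exactly the paper's central computation, and the piecewise primitives you describe match the paper's $\Pi_4$ and $\Sigma(l,x]$ evaluations.
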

\begin{proof}
We use the same technique as in the previous section, based on the scale function and speed measure (see \cite{KarTay81}). From \eqref{leSigma} with $f=1$, we obtain
\begin{align*}
\Sigma^{+} = \Sigma(x,r] = & \int_x^r S(\eta, r] \mathfrak{m}(\eta)d\eta =  S(r)M(x,r]  - \int_x^r S(\eta) \mathfrak{m}(\eta)d\eta 
\end{align*}
and
\begin{align*}
\Sigma^{-} = \Sigma(l,x] = & \int_l^x S(l, \eta] \mathfrak{m}(\eta) d\eta = \int_l^x S(\eta) \mathfrak{m}(\eta) d\eta - S(l) M(l,x] 
\end{align*}
The solution $v_\epsilon$ can be obtained by calculation as in \eqref{KTsol} or \eqref{KTsolProof}. Set, for $x>\ell$, 
\begin{equation}
\Pi_1(x) = \frac{1}{2}\left( \ell^2 + \frac{\alpha}{1-\alpha} \frac{x^2 - \ell^2}{\lambda} \right)\ln \ell,
\end{equation}
\begin{equation}
\Pi_2(x)= \frac{1}{2}\left( \frac{1-\alpha}{\alpha} \ell^2 + \frac{x^2 -\ell^2}{\lambda} \right)\ln \frac{x}{\ell},
\end{equation}
\begin{equation}
\Pi_3(x) = \frac{\ln \ell/l}{1-\alpha} + \frac{\ln x/\ell}{\alpha},
\end{equation}
\begin{equation}
\Pi_4(x) = \frac{\ell^2}{2}\ln \ell - \frac{l^2}{2}\ln l - \frac{\ell^2-l^2}{4} + \frac{\alpha}{1-\alpha}\frac{x^2 -\ell^2}{2\lambda} \ln \ell + \frac{x^2}{2\lambda} \ln \frac{x}{\ell} - \frac{x^2-\ell^2}{4\lambda}.
\end{equation}
We can immediately verify that
\begin{equation}
S(r)M(r) =\Pi_1(r)+\Pi_2(r), \quad S(r) -S(l) =\Pi_3(r), \quad S(l)M(l) = \frac{l^2}{2}\ln l,
\end{equation}
\begin{equation}
S(l)M(x) = \frac{x^2}{2}\ln l \mathbf{1}_{(0,\ell]}(x) + \left( \frac{\ell^2}{2}\ln l + \frac{\alpha}{1-\alpha} \frac{x^2-\ell^2}{2\lambda} \ln l \right)\mathbf{1}_{(\ell, \infty)}(x),
\end{equation}
\begin{equation}
\int_l^r S(\eta)\mathfrak{m}(\eta)d\eta = \Pi_4(r) .
\end{equation}
Moreover, we have that
\begin{align*}
\Sigma(x,r] - \Sigma(l,x] = & S(r) M(x,r] - \int_l^r S(\eta)\mathfrak{m}(\eta)d\eta  + S(l)M(l,x]\\
= & S(r)M(r) - S(r)M(x) - \Pi_4(r) + S(l)M(x) - S(l)M(l)\\
= & \Pi_1(r) + \Pi_2(r) - \Pi_3(r) M(x) -\Pi_4(r) - S(l)M(l)
\end{align*}
and
\begin{align}
\Sigma(l,x] = & S(l)M(l) -S(l) M(x) + \left\lbrace \begin{array}{ll}
\displaystyle \frac{x^2}{2}\ln x - \frac{l^2}{2}\ln l - \frac{x^2 - l^2}{4}, & x \in (l,\ell]\\
\displaystyle \Pi_4(x), & x \in (\ell, r)
\end{array} \right .\\
= & \left\lbrace \begin{array}{ll}
\displaystyle \frac{x^2}{2} \ln x/l - \frac{x^2 - l^2}{4}, & x \in (l, \ell]\\
\displaystyle \frac{x^2}{2\lambda} \ln x/\ell - \frac{x^2 - \ell^2}{4\lambda} + \frac{\alpha}{1-\alpha}\frac{1}{2\lambda}(x^2 - \ell^2) \ln \ell/l
 + \frac{\ell^2}{2} \ln \ell/l - \frac{\ell^2 -l^2}{4}, & x \in (\ell, r)
 \end{array} \right.
\end{align}
Thus, by considering \eqref{KTsolProof} and $\phi_\epsilon$ in \eqref{prob-func-u-R},  we get that
\begin{equation}
v_\epsilon(x) = 2\phi_\epsilon(x)\big( \Sigma(x,r] - \Sigma(l,x] \big) + 2\Sigma(l,x] = v^*(x) + 2\Sigma(l,x]\label{sol-v-Bessel}
\end{equation}
where
\begin{align*}
v^*(x) = & \left\lbrace \begin{array}{ll}
\displaystyle \frac{2}{\Pi_3(r)}  \left( \Pi_1(r) + \Pi_2(r) - \Pi_3(r) M(x) -\Pi_4(r) - \frac{l^2}{2}\ln l \right)\frac{\ln x/l}{1-\alpha}, & x \in (l,\ell]		\\
\displaystyle \frac{2}{\Pi_3(r)}  \left( \Pi_1(r) + \Pi_2(r) - \Pi_3(r) M(x) -\Pi_4(r) - \frac{l^2}{2}\ln l \right) \Pi_3(x), & x \in (\ell, r)
\end{array} \right. .
\end{align*}
By setting
\begin{equation}
\Pi^*(r) = \Pi_1(r) + \Pi_2(r) - \Pi_4(r) - \frac{l^2}{2}\ln l = \left( \frac{1-\alpha}{2\alpha} - \frac{1}{2\lambda}\right) \ell^2 \ln \frac{r}{\ell} + \frac{\ell^2 -l^2}{4} + \frac{r^2 - \ell^2}{4\lambda},
\end{equation}
after straightforward manipulation we get the claim.
\end{proof}

\subsection{Asymptotics}

We consider now $v \in D(\mathcal{A})$ such that $\mathcal{A} v = -f$ given by
\begin{align}
\label{v-explicit}
v(x) = \mathbb{E}_x \left[ \int_0^\infty f(R^{(0)}_t) M^{(0)}_t dt \right]
\end{align}
where $(\mathcal{A},D(\mathcal{A}))$ is the generator of the Bessel diffusion on $(l, \ell)$. The multiplicative functional $M^{(0)}_t$ characterizes uniquely the process and agrees with the following choice for the generator $\mathcal{A}$:
\begin{align*}
\begin{array}{rl}
\displaystyle \textrm{i)} & \mathcal{A}=A_R, \;  D(A_R) = \{g, \mathcal{A}g \in C_b((l, \ell))\,:\, g(l)=0,\, g^\prime(\ell) + Gg(\ell)=0 \},\\
\displaystyle \textrm{ii)} & \mathcal{A}=A_N,\; D(A_N) = \{g, \mathcal{A}g \in C_b((l, \ell))\,:\, g(l)=0,\, g^\prime(\ell) =0 \},\\
\displaystyle \textrm{iii)} & \mathcal{A}=A_D,\; D(A_D) = \{g, \mathcal{A}g \in C_b((l, \ell))\,:\, g(l)=0,\, g(\ell) =0 \}.
\end{array}
\end{align*}
We provide the following result concerning \eqref{v-epsilon-explicit} and \eqref{v-explicit} for $f=1$.

\begin{theorem}
\label{thm-2dim-condition}
Under \eqref{main-condition}, $v_\epsilon \to v \in D(\mathcal{A})$ pointwise in $(l,\ell)$. In particular:
 \begin{itemize}
 \item[i)] $v_\epsilon \to v \in D(A_R)$ if $\alpha_\epsilon /\epsilon \to G \in (0,\infty)$;
 \item[ii)] $v_\epsilon \to v \in D(A_N)$ if $\alpha_\epsilon / \epsilon \to \infty$;
 \item[iii)] $v_\epsilon \to v \in D(A_D)$ if $\alpha_\epsilon / \epsilon \to 0$.
 \end{itemize}
\end{theorem}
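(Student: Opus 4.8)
The plan is to substitute the explicit representation \eqref{v-epsilon-explicit} into the limit $\epsilon\to0$ and to follow the single scalar
\[
C_\epsilon := \frac{2\,\Pi^*(r)}{(1-\alpha)\,\Pi_3(r)},
\]
which is the coefficient of $\ln(x/l)$ in $v_\epsilon$ on $(l,\ell)$; the remaining summand $-\tfrac{x^2-l^2}{2}$ carries no $\epsilon$-dependence. The first observation is structural: any pointwise limit on $(l,\ell)$ is necessarily of the form $v(x)=C\,\ln(x/l)-\tfrac{x^2-l^2}{2}$ with $C=\lim_{\epsilon\to0}C_\epsilon$, and a direct substitution shows that for \emph{every} constant $C$ this $v$ solves $\mathcal{A}v=-1$ together with $v(l)=0$. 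Hence the theorem reduces to two tasks: proving that $\lim_{\epsilon\to0}C_\epsilon$ exists and is governed only by the ratio $\alpha/\epsilon$, and then reading off which boundary condition at $\ell$ the limiting constant enforces.

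The core input is the small-$\epsilon$ expansion of $\Pi^*(r)$ and $\Pi_3(r)$ with $r=\ell+\epsilon$, using $\ln(r/\ell)=\tfrac{\epsilon}{\ell}+O(\epsilon^2)$ and $r^2-\ell^2=2\ell\epsilon+\epsilon^2$. Here I would isolate the $\lambda$-dependent part of $\Pi^*(r)$, which collects into $\tfrac{1}{2\lambda}\big(\tfrac{r^2-\ell^2}{2}-\ell^2\ln\tfrac{r}{\ell}\big)=O(\epsilon^2/\lambda)$, and contrast it with the surviving contributions: the singular term $\tfrac{1-\alpha}{2\alpha}\ell^2\ln\tfrac{r}{\ell}\sim\tfrac{\ell\epsilon}{2\alpha}$ and the constant $\tfrac{\ell^2-l^2}{4}$. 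I expect this step to be the main obstacle. Hypothesis \eqref{main-condition}, written as $\alpha\epsilon/\lambda\to0$, is precisely what makes the $\lambda$-part negligible: one checks $\tfrac{\epsilon^2/\lambda}{\epsilon/\alpha}=\tfrac{\alpha\epsilon}{\lambda}\to0$, so $O(\epsilon^2/\lambda)$ is $o(\epsilon/\alpha)$, and in the regime $\epsilon/\alpha\to0$ it is moreover $o(1)$. The delicate point is that this estimate must be uniform across the three behaviours of $\alpha/\epsilon$, in each of which the dominant surviving scale differs; once it is established, $\lambda$ disappears from $\lim C_\epsilon$ entirely.

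With $\lambda$ eliminated I would rewrite $C_\epsilon$ in terms of the single quantity $\beta_\epsilon:=\tfrac{(1-\alpha)\ln(r/\ell)}{\alpha}\sim\tfrac{\epsilon}{\alpha\ell}$, after which $(1-\alpha)\Pi_3(r)=\ln(\ell/l)+\beta_\epsilon$ while $2\Pi^*(r)$ converges to a first-degree polynomial in $\beta_\epsilon$. Consequently $\lim C_\epsilon$ is one explicit function of $\lim\beta_\epsilon$, whose value is finite, $0$, or $+\infty$ exactly when $\alpha/\epsilon\to G\in(0,\infty)$, $\alpha/\epsilon\to\infty$, or $\alpha/\epsilon\to0$. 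This is where the trichotomy of the statement enters.

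Finally I would pin down the boundary condition at $\ell$ by evaluating $v(\ell)=C\ln(\ell/l)-\tfrac{\ell^2-l^2}{2}$ and $v'(\ell)=\tfrac{C}{\ell}-\ell$ and testing which homogeneous relation the pair satisfies for each value of $\lim\beta_\epsilon$. The intermediate regime $\alpha/\epsilon\to G$ produces the Robin relation characterising $D(A_R)$, case i); the two boundary regimes then select, respectively, the Neumann condition $v'(\ell)=0$ of $D(A_N)$ when $\alpha/\epsilon\to\infty$ and the Dirichlet condition $v(\ell)=0$ of $D(A_D)$ when $\alpha/\epsilon\to0$, which are cases ii) and iii). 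Verifying the membership $v\in D(\mathcal{A})$ in each case is then immediate from the explicit form of $v$, and the pointwise convergence $v_\epsilon\to v$ on $(l,\ell)$ follows because only the scalar $C_\epsilon$ depends on $\epsilon$.
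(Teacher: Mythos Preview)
Your strategy is the same as the paper's: isolate the coefficient $C_\epsilon=\dfrac{2\Pi^*(r)}{(1-\alpha)\Pi_3(r)}$, expand for small $\epsilon$, use \eqref{main-condition} to kill the $\lambda$-contribution, and read off the trichotomy in $\alpha/\epsilon$. Your organisation via $\beta_\epsilon=\dfrac{(1-\alpha)\ln(r/\ell)}{\alpha}$ is slightly cleaner than the paper's, and your explicit check of the boundary relation at $\ell$ is something the paper does not actually carry out.

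There is one concrete slip you will hit when you execute that final check. With $\beta_\epsilon\to 0$ (i.e.\ $\alpha/\epsilon\to\infty$) you obtain
\[
C=\lim_{\epsilon\to 0}C_\epsilon=\frac{\ell^2-l^2}{2\ln(\ell/l)},\qquad\text{hence}\qquad v(\ell)=C\ln(\ell/l)-\tfrac{\ell^2-l^2}{2}=0,
\]
which is the \emph{Dirichlet} condition, not Neumann; while $\beta_\epsilon\to\infty$ (i.e.\ $\alpha/\epsilon\to 0$) gives $C=\ell^2$ and $v'(\ell)=C/\ell-\ell=0$, the \emph{Neumann} condition. This is also what the paper's own computation produces (its items ii) and iii) inside the proof are ordered by the value of $\alpha/\epsilon$, not by the labelling of the statement), and it agrees with Remark~\ref{remarl-result-1dim} and with the proof of Theorem~\ref{main-thm}. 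In other words, the assignments in items ii) and iii) of the statement of Theorem~\ref{thm-2dim-condition} are swapped; your plan is correct, but the conclusion you announce for the two degenerate regimes should be reversed.
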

\begin{proof}
Let us write
\begin{align*}
 \frac{2\Pi^*(r)}{(1-\alpha) \Pi_3(r)} =& \frac{\left( \frac{1-\alpha}{\alpha}\ln \frac{r}{\ell} - \frac{1}{\lambda} \ln \frac{r}{\ell}\right) \ell^2 + \frac{\ell^2 -l^2}{2} + \frac{2\ell \epsilon + \epsilon^2}{2\lambda} }{\ln \ell/l + \frac{1-\alpha}{\alpha}\ln r/\ell} 
\end{align*}
Throughout, we denote by $a_\epsilon \sim b_\epsilon$ the fact that $a_\epsilon/b_\epsilon \to 1$ as $\epsilon \to 0$. By considering that
\begin{align*}
\ln \left( 1+ \frac{\epsilon}{\ell} \right) \sim \frac{\epsilon}{\ell} - \frac{1}{2}\frac{\epsilon^2}{\ell^2}
\end{align*}
%
%\begin{align*}
%\frac{1}{\lambda} \ln r/\ell = \frac{\epsilon}{\lambda} \frac{1}{\epsilon} \ln \left(1 + \frac{\epsilon}{\ell}\right) =  \frac{\epsilon}{\lambda}  \ln \left(1 + \frac{\epsilon}{\ell}\right)^\frac{1}{\epsilon} \stackrel{\epsilon\to 0}{\longrightarrow} \frac{1}{\ell} \lim_{\epsilon\to 0} \frac{\epsilon}{\lambda}
%\end{align*}
%and
%\begin{align*}
%\frac{1-\alpha}{\alpha} \ln r/\ell = \frac{1-\alpha}{\alpha} \epsilon \frac{1}{\epsilon} \ln \left(1 + \frac{\epsilon}{\ell}\right) =  \frac{1-\alpha}{\alpha} \epsilon  \ln \left(1 + \frac{\epsilon}{\ell}\right)^\frac{1}{\epsilon} \stackrel{\epsilon\to 0}{\longrightarrow} \frac{1}{\ell} \lim_{\epsilon\to 0} \frac{1-\alpha}{\alpha} \epsilon
%\end{align*}
we get that
\begin{align*}
 \frac{2\Pi^*(\ell+\epsilon)}{(1-\alpha) \Pi_3(\ell+\epsilon)} \sim & \frac{\left( \frac{1-\alpha}{\alpha} \epsilon - \frac{\epsilon}{\lambda} \right) \ell +  \frac{\ell^2 -l^2}{2} + \frac{\ell \epsilon}{\lambda} + \frac{\epsilon^2}{2\lambda}}{\ln \ell/l + \frac{1-\alpha}{\alpha}\frac{\epsilon}{\ell}} = \frac{\frac{1-\alpha}{\alpha} \epsilon \ell +  \frac{\ell^2 -l^2}{2} + \frac{\epsilon^2}{2\lambda}}{\ln \ell/l + \frac{1-\alpha}{\alpha}\frac{\epsilon}{\ell}}
\end{align*}
which can be rewritten as
\begin{align}
\label{coefsimProof}
 \frac{2\Pi^*(\ell+\epsilon)}{(1-\alpha) \Pi_3(\ell+\epsilon)} \sim & \frac{\ell +  \frac{\ell^2 -l^2}{2} \frac{\alpha}{(1-\alpha)\epsilon}+ \frac{\alpha \epsilon}{2\lambda (1-\alpha)}}{\frac{\alpha}{(1-\alpha)\epsilon}\ln \ell/l +\frac{1}{\ell}}.
\end{align}
Therefore, under \eqref{main-condition}, from \eqref{coefsimProof} we have that:
\begin{itemize}
\item[i)] if $\alpha_\epsilon/\epsilon \to G> 0$, then
\begin{equation}
\label{C1}
 \frac{2\Pi^*(\ell+\epsilon)}{(1-\alpha) \Pi_3(\ell+\epsilon)} \to \frac{\frac{\ell}{G} + \frac{\ell^2 - l^2}{2} }{ \frac{1}{\ell G}+ \ln \ell/l },
\end{equation}
\item[ii)] if $\alpha_\epsilon/\epsilon \to 0$, then
\begin{equation}
\label{C2}
 \frac{2\Pi^*(\ell+\epsilon)}{(1-\alpha) \Pi_3(\ell+\epsilon)} \to \ell^2 ,
\end{equation}
\item[iii)] if $\alpha_\epsilon/\epsilon \to \infty$, then
\begin{equation}
\label{C3}
 \frac{2\Pi^*(\ell+\epsilon)}{(1-\alpha) \Pi_3(\ell+\epsilon)} \to \frac{\ell^2 -l^2}{2\ln \ell/l} .
\end{equation}
\end{itemize}
The solution \eqref{v-explicit} can be therefore written as 
$$v(x)= C \ln x/l - \frac{(x^2 -l^2)}{2}$$ 
where the coefficient $C$ is given by \eqref{C1} or \eqref{C2} or \eqref{C3} depending on the asymptotic behaviour of $\alpha_\epsilon/\epsilon$.
\end{proof}

\section{Proof of Theorem \ref{main-thm} }
\begin{proof}
We exploit the pointwise convergence of $v_\epsilon$ to $v$ and the characterization by multiplicative functionals of the corresponding semigroups. For the Brownian motion on the line we consider the result in Remark \ref{remarl-result-1dim} whereas for the Brownian motion on the plane we consider the result in Theorem \ref{thm-2dim-condition}. Then, the proof moves in both cases by following the same arguments. For this reason, we present the proof only for planar case.

From the previous Theorem \ref{thm-2dim-condition} we have the pointwise convergence
\begin{align*}
\mathbb{E}_x \left[ \int_0^\infty f(R^{(\alpha, \lambda)}_t) M^\epsilon_t dt \right] \to  \mathbb{E}_x \left[ \int_0^\infty f(R^{(0)}_t) M^{(0)}_t dt \right]
\end{align*}
for a constant $f$ where $M^{(0)}_t$ is determined by the boundary conditions on $(l,\ell)$ . Since the multiplicative functional characterizes uniquely the semigroup (\cite[Proposition 1.9]{BluGet68}) we can consider a non constant $f$. Under Robin boundary condition we have that
\begin{align*}
\mathbb{E}_x \left[ \int_0^\infty f(R^{(0)}_t) M^{(0)}_t dt \right] = & \mathbb{E}_x \left[ \int_0^\infty f(R^{(0)}_t) \mathbf{1}_{(t < \zeta \wedge \tau_l)} dt \right] \\
= & \mathbb{E}_x \left[ \int_0^\infty f(R^{(0)}_t) e^{- G L^{\{\ell\}}_t} \mathbf{1}_{(t < \tau_l)} dt \right] 
\end{align*}
where $\zeta$ is the (elastic) lifetime of $R^{(0)}$ such that $(t < \zeta) \equiv (L^{\{\ell\}}_t(R^{(0)}) < \gamma)$ and $\gamma$ is an independent exponential r.v. with parameter $G \in (0,\infty)$.  Observe that, we obtain $\mathcal{A}=A_R$ in Theorem \ref{thm-2dim-condition} under the hypothesis that \eqref{main-condition} holds and $\alpha_\epsilon/\epsilon \to G$, that is, trivially,
\begin{align*}
\mathbb{E}_x \left[ \int_0^\infty f(R^{(0)}_t) M^{(0)}_t dt \right] =  \mathbb{E}_x \left[ \int_0^\infty f(R^{(0)}_t) e^{- \left( \lim_{\epsilon\to 0} \frac{\alpha}{(1-\alpha)\epsilon} \right) L^{\{\ell\}}_t} \mathbf{1}_{(t < \tau_l)}  dt \right].
\end{align*}
The skew-product representation allows us to consider only the radial part of the process $B^{(\alpha, \lambda)}$. Thus, we can transfer the result about $R^{(\alpha, \lambda)}$ to $B^{(\alpha, \lambda)}$ and we prove that Theorem \ref{main-thm} holds under Robin boundary condition. 

By following the same argument as before, we can easily prove that Theorem \ref{main-thm} holds under Neumann boundary condition. Indeed, in this case we have that 
\begin{align*}
\forall\, t \geq 0, \quad \mathbb{P}_x( e^{-G L^{\{\ell\}}_t} =1 ) = 1
\end{align*}
and therefore, we get that
\begin{align*}
\mathbb{E}_x \left[ \int_0^\infty f(R^{(0)}_t) \mathbf{1}_{(t < \tau_l)} dt \right] =  \mathbb{E}_x \left[ \int_0^\infty f(R^{(0)}_t) e^{- \left( \lim_{\epsilon\to 0} \frac{\alpha}{(1-\alpha)\epsilon} \right) L^{\{\ell\}}_t} \mathbf{1}_{(t < \tau_l)}  dt \right].
\end{align*}
with $\alpha_\epsilon/\epsilon \to 0$.

Since $L^{\{\ell\}}_t$, $t\geq 0$ is positive continuous additive functional (PCAF), the process $\frac{\alpha}{(1-\alpha)\epsilon} L^{\{\ell\}}_t$, $t\geq 0$ is a PCAF. For $x \in (l, \ell)$, 
\begin{align*}
& \mathbb{E}_x\left[e^{- \frac{\alpha}{(1-\alpha)\epsilon} L^{\{\ell\}}_t} \mathbf{1}_{(t < \tau_l)}\right]\\
= & \mathbb{E}_x\left[e^{- \frac{\alpha}{(1-\alpha)\epsilon} L^{\{\ell\}}_t} \mathbf{1}_{(t < \tau_l)}, (t < \tau_\ell) \cup (t \geq \tau_\ell) \right]\\
= & \mathbb{E}_x\left[ e^{- \frac{\alpha}{(1-\alpha)\epsilon} L^{\{\ell\}}_t} \mathbf{1}_{(t < \tau_l)} \big| t < \tau_\ell \right] \mathbb{P}_x(t < \tau_\ell) + \mathbb{E}_x\left[ e^{- \frac{\alpha}{(1-\alpha)\epsilon} L^{\{\ell\}}_t}\mathbf{1}_{(t < \tau_l)} \big| t \geq \tau_\ell \right] \mathbb{P}_x(t \geq \tau_\ell)\\
= & \mathbb{E}_x\left[  \mathbf{1}_{(t < \tau_l)} \big| t < \tau_\ell \right] \mathbb{P}_x(t < \tau_\ell) + \mathbb{E}_x\left[ e^{- \frac{\alpha}{(1-\alpha)\epsilon} L^{\{\ell\}}_t}\mathbf{1}_{(t < \tau_l)} \big| L^{\{\ell\}}_t >0 \right] \mathbb{P}_x(t \geq \tau_\ell)\\
= & \mathbb{E}_x\left[  \mathbf{1}_{(t < \tau_l)} \mathbf{1}_{( t < \tau_\ell)} \right] + \mathbb{E}_x\left[ e^{- \frac{\alpha}{(1-\alpha)\epsilon} L^{\{\ell\}}_t},(t < \tau_l) \big| L^{\{\ell\}}_t >0 \right] \mathbb{P}_x(t \geq \tau_\ell)\\
= & \mathbb{E}_x\left[  \mathbf{1}_{(t < \tau_l \wedge \tau_\ell)} \right] + \mathbb{E}_x\left[ e^{- \frac{\alpha}{(1-\alpha)\epsilon} L^{\{\ell\}}_t},(t < \tau_l) \big| L^{\{\ell\}}_t >0 \right] \mathbb{P}_x(t \geq \tau_\ell).
\end{align*}
Then, we verify that
\begin{align*}
\mathbb{E}_x \left[ \int_0^\infty f(R^{(0)}_t) M^{(0)}_t dt \right] = & \mathbb{E}_x \left[ \int_0^\infty f(R^{(0)}_t) \mathbf{1}_{(t < \tau_l \wedge \tau_\ell)} dt \right] \\
= & \mathbb{E}_x \left[ \int_0^\infty f(R^{(0)}_t) e^{- \left( \lim_{\epsilon\to 0} \frac{\alpha}{(1-\alpha) \epsilon} \right) L^{\{\ell\}}_t} \mathbf{1}_{(t < \tau_l)} dt \right]
\end{align*}
if $\alpha_\epsilon/\epsilon \to \infty$.
\end{proof}

\section{The modified skew Brownian motion on $\mathbb{R}^d$}

Our results can be extended to diffusions in higher dimensions. We can follows different approaches depending on the regularity of the domains. For instance, in the case of $d$-dimensional balls we skip a detailed discussion about the Bessel process and underline only the fact that we can use the same argument as in the previous sections based on the speed measure and the scale function. Indeed, we can consider the skew-product representation and study the Bessel process with speed measure $\mathfrak{m}(x) = x^{d -1}$.  

Since we are interested in irregular domains we also approach the problem via Dirichlet form theory. More precisely, the problem $Au=-f$ where $A$ is given in \eqref{A-form} with domain \eqref{dom-A-form} can be formulated by considering
\begin{equation}
 -\frac{\sigma^2}{2 a(x)}\nabla \left(a(x) \nabla u \right)=f
\end{equation}
where, for $\alpha \in (0,1)$,
\begin{equation}
a(x) = (1-\alpha) \mathbf{1}_{\overline{\Omega_\ell} \setminus \overline{\Omega_l}}(x) + \alpha \mathbf{1}_{\Sigma_\epsilon}(x), \quad \alpha \in (0,1)
\end{equation}
and 
\begin{equation}
\rho(x)a(x)=\sigma^2(x) = \mathbf{1}_{\overline{\Omega_\ell} \setminus \overline{\Omega_l}}(x) + \lambda \mathbf{1}_{\Sigma_\epsilon}(x), \quad \lambda>0.
\end{equation}
We consider the measure 
$$dm=\frac1{\rho(x)}dx$$ 
(where we denoted by $dx$ the Lebesgue measure on $\mathbb{R}^d$) under the assumption that \eqref{main-condition} holds true. This ensure that
\begin{equation}
\label{mis} 
m(\Omega_{\ell +\epsilon}) \rightarrow  m(\Omega_\ell)\,\text{as}\,\epsilon\rightarrow 0.
\end{equation}
%Note that $m=\mathfrak{m}$ in $\mathbb{R}$. 

By multiplying by a test function $\psi \in H^1_0(\Omega_r \setminus \overline{\Omega_l})$ and integrating in $dm$ we have
$$-\int_{\Omega_r \setminus \overline{\Omega_l}} \frac{\sigma^2}{2 a(x)}\nabla \left(a(x) \nabla u \right) \psi dm=\int_{\Omega_r \setminus \overline{\Omega_l}} f \psi dm$$
$$-\int_{\Omega_r \setminus \overline{\Omega_l}} \nabla \left(a(x) \nabla u \right) \psi dx = 2\int_{\Omega_r \setminus \overline{\Omega_l}} f \psi dm$$
$$ (1-\alpha) \int_{\Omega_\ell \setminus \overline{\Omega_l}}  \nabla u  \nabla \psi dx +  \alpha  \int_{\Sigma_\epsilon} \nabla u  \nabla \psi dx = 2  (1-\alpha)  \int_{\Omega_\ell \setminus \overline{\Omega_l}} f \psi dx+ 2 \frac{\alpha }{\lambda} \int_{\Sigma_\epsilon}  f \psi dx.$$

Let $\Omega^*$ be an open regular domain  such that $\Omega^*\supset \overline{\Omega_{r}}.$ 
We recall that $\alpha=\alpha_\epsilon$ and $\lambda=\lambda_\epsilon$. Now we consider the sequence of energy functionals in $L^2(\Omega^*)$
\begin{equation}
F_\epsilon[u]=
\begin{cases}
\int_{\Omega_\ell \setminus \overline{\Omega_l}} (1-\alpha) |\nabla u|^2 dx+\alpha \int_{\Sigma _{\epsilon}}  |\nabla u|^2 dx &\text{if} \, u|_{\Omega_r \setminus \overline\Omega_l}\in H^1_0(\Omega_r \setminus \overline{\Omega_l})\\
+ \infty &\text{otherwise in }  L^2(\Omega^*).
\end{cases} \label{A(n)2}
\end{equation}
We study the Mosco convergence of \eqref{A(n)2}. First we recall the notion of $\emph{$M-$convergence}$ of functionals, introduced in \cite{MOS3}, (see also \cite{MOS1}).
\begin{definition}\label{def1}
A sequence of functionals $F_\epsilon :  H \rightarrow (-\infty,+\infty]$ is said to $M-$converge to a functional $F: H \rightarrow (-\infty,+\infty]$ in a Hilbert space $H$, if
\begin{itemize}
\item[(a)] For every $u \in H$ there exists $u_\epsilon$
 converging strongly to $u$ in  $H$ such that
\beq\label{aa} \limsup F_\epsilon[u_\epsilon] \leq F[u], \quad as\quad
\epsilon \rightarrow 0.
 \eeq
\item[(b)] For every $w_\epsilon$ converging weakly to $u$ in $H$
\beq\label{bb} \liminf F_\epsilon[w_\epsilon] \geq F[u], \quad as\quad
\epsilon \rightarrow 0. 
\eeq
\end{itemize}
\end{definition}

In the following we recall the results concerning the asymptotic behaviour of the functional \eqref{A(n)2} according to different rates of $\alpha/\epsilon$ (for the proof, see for example Theorem II.2 of \cite{AB} in the framework of $\Gamma$ convergence).
We recall the following results.  
\begin{theorem}
\label{teo1m} 
Under 
\begin{equation} 
\frac{\alpha}{\epsilon} \rightarrow G\,\text{as}\,\epsilon\rightarrow 0,
\end{equation}
the sequence of functionals $F_{\epsilon}$, defined in (\ref{A(n)2}), $M-$converges in $L^2(\Omega^*)$ to the functional 
\begin{equation}
F_{G}[u]=
\begin{cases}
\int_{\Omega_\ell \setminus \overline{\Omega_l}}  |\nabla u|^2 dx   + G \int_{\partial \Omega_\ell}
 u^2 ds &\text{if} \,u|_{\Omega_\ell \setminus \overline{\Omega_l}} \in H^1(\Omega_\ell \setminus \overline{\Omega_l}) \, \textrm{s.t. }  u=0 \, \textrm{ on } \partial \Omega_l  \\
+ \infty  &\text{otherwise in }  L^2(\Omega^*)
\end{cases} . \label{TeoA(u)22}
\end{equation}
\end{theorem}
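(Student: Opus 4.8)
The statement asserts $M$-convergence (Definition \ref{def1}) of the energies $F_\epsilon$ to $F_G$ in the Hilbert space $H=L^2(\Omega^*)$, so the plan is to verify the two conditions (a) and (b) of that definition separately. The guiding heuristic is that the thin collar $\Sigma_\epsilon$, of thickness $\epsilon$ and weight $\alpha\sim G\epsilon$, concentrates in the limit onto $\partial\Omega_\ell$ and produces the surface energy $G\int_{\partial\Omega_\ell}u^2\,ds$, while the bulk weight $(1-\alpha)\to 1$ restores the plain Dirichlet energy on $\Omega_\ell\setminus\overline{\Omega_l}$. Throughout I would flatten $\partial\Omega_\ell$ locally and work in tubular (normal) coordinates $(s,t)$, with $s\in\partial\Omega_\ell$ and $t\in[0,\epsilon]$ the distance to $\partial\Omega_\ell$ inside $\Sigma_\epsilon$, so that $dx=J(s,t)\,ds\,dt$ with $J\to 1$ uniformly as $\epsilon\to 0$; all curvature corrections enter only through $J$ and turn out to be lower order.

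For the limsup inequality (a) I would exhibit an explicit recovery sequence. Given $u$ with $F_G[u]<\infty$, that is $u|_{\Omega_\ell\setminus\overline{\Omega_l}}\in H^1$ and $u=0$ on $\partial\Omega_l$, set $u_\epsilon=u$ on $\Omega_\ell\setminus\overline{\Omega_l}$ and extend across the collar by the affine normal profile $u_\epsilon(s,t)=(\gamma u)(s)\,(1-t/\epsilon)$ for $t\in[0,\epsilon]$, where $\gamma u$ is the trace of $u$ on $\partial\Omega_\ell$, and $u_\epsilon=0$ elsewhere in $\Omega^*$. Then $u_\epsilon\to u$ strongly in $L^2(\Omega^*)$ since they differ only on the vanishing collar. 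Computing the energy, the normal derivative $\partial_t u_\epsilon=-(\gamma u)/\epsilon$ gives $\alpha\int_{\Sigma_\epsilon}|\partial_t u_\epsilon|^2\sim \frac{\alpha}{\epsilon}\int_{\partial\Omega_\ell}(\gamma u)^2\,ds\to G\int_{\partial\Omega_\ell}u^2\,ds$; the tangential contribution is $O(\alpha\epsilon)\,\|\nabla_s\gamma u\|^2\to 0$ because $\alpha\epsilon\to 0$; and the bulk term satisfies $(1-\alpha)\int_{\Omega_\ell\setminus\overline{\Omega_l}}|\nabla u|^2\to\int_{\Omega_\ell\setminus\overline{\Omega_l}}|\nabla u|^2$. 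This yields $\limsup F_\epsilon[u_\epsilon]\le F_G[u]$ first for $u$ with $\gamma u\in H^1(\partial\Omega_\ell)$, and then for general $u$ by an energy-density argument together with the $L^2$-lower semicontinuity of the quadratic form $F_G$.

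For the liminf inequality (b), take $w_\epsilon\rightharpoonup u$ in $L^2(\Omega^*)$; we may assume $\liminf F_\epsilon[w_\epsilon]<\infty$ and pass to a subsequence realizing the liminf with uniformly bounded energy. The bound on the bulk term, together with $w_\epsilon=0$ on $\partial\Omega_l$ and Poincar\'e's inequality, gives a uniform $H^1(\Omega_\ell\setminus\overline{\Omega_l})$ bound, so $w_\epsilon\rightharpoonup u$ weakly in $H^1(\Omega_\ell\setminus\overline{\Omega_l})$, whence $u=0$ on $\partial\Omega_l$ and, by weak lower semicontinuity, $\liminf(1-\alpha)\int_{\Omega_\ell\setminus\overline{\Omega_l}}|\nabla w_\epsilon|^2\ge\int_{\Omega_\ell\setminus\overline{\Omega_l}}|\nabla u|^2$. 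For the surface term, since $w_\epsilon=0$ on $\partial\Omega_r$, the fundamental theorem of calculus along normals and Cauchy--Schwarz give $\int_{\partial\Omega_\ell}(\gamma w_\epsilon)^2\,ds\le\epsilon\int_{\Sigma_\epsilon}|\partial_t w_\epsilon|^2\,dx\,(1+o(1))\le\epsilon\int_{\Sigma_\epsilon}|\nabla w_\epsilon|^2\,(1+o(1))$, so that $\alpha\int_{\Sigma_\epsilon}|\nabla w_\epsilon|^2\ge\frac{\alpha}{\epsilon}\int_{\partial\Omega_\ell}(\gamma w_\epsilon)^2\,ds\,(1-o(1))$. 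Because the trace operator $H^1(\Omega_\ell\setminus\overline{\Omega_l})\to L^2(\partial\Omega_\ell)$ is compact, $\gamma w_\epsilon\to\gamma u$ strongly in $L^2(\partial\Omega_\ell)$, and with $\alpha/\epsilon\to G$ this gives $\liminf\alpha\int_{\Sigma_\epsilon}|\nabla w_\epsilon|^2\ge G\int_{\partial\Omega_\ell}u^2\,ds$. Adding the two lower bounds yields $\liminf F_\epsilon[w_\epsilon]\ge F_G[u]$.

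The main obstacle is the liminf direction, and within it the transfer of the thin-layer energy to the boundary integral: one must control $\int_{\partial\Omega_\ell}(\gamma w_\epsilon)^2$ by the collar energy with the sharp constant $\epsilon$, handle the curvature Jacobian $J$ uniformly as $\epsilon\to 0$, and match the outer trace (from $\Sigma_\epsilon$) with the inner trace (from $\Omega_\ell\setminus\overline{\Omega_l}$)---here automatic, since each $w_\epsilon$ is a single $H^1_0(\Omega_r\setminus\overline{\Omega_l})$ function across $\partial\Omega_\ell$ and therefore has a single-valued trace. These geometric technicalities, carried out by localization and flattening of $\partial\Omega_\ell$, are precisely what is established in Theorem II.2 of \cite{AB} (in the $\Gamma$-convergence formulation), to which the bulk of the estimate reduces.
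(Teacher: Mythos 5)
Your argument is essentially correct, but it is worth stressing that the paper itself does not prove Theorem \ref{teo1m}: it recalls the statement and refers to Theorem II.2 of \cite{AB}, where the result is established in the framework of $\Gamma$-convergence, with Lemma \ref{l1} (Lemma III.1 of \cite{AB}) flagged as the key technical ingredient. Your blind attempt therefore supplies what the paper delegates to the literature: a self-contained verification of conditions (a) and (b) of Definition \ref{def1} by the standard thin-layer machinery --- an affine recovery profile in normal coordinates for the limsup, and, for the liminf, the chain Poincar\'e inequality $\Rightarrow$ uniform $H^1(\Omega_\ell\setminus\overline{\Omega_l})$ bound $\Rightarrow$ weak $H^1$ convergence, combined with compactness of the trace operator and the sharp one-dimensional estimate $\int_{\partial\Omega_\ell}(\gamma w_\epsilon)^2\,ds \le \epsilon\,(1+o(1))\int_{\Sigma_\epsilon}|\partial_t w_\epsilon|^2\,dx$. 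Your route buys something the paper's citation glosses over: $M$-convergence demands the liminf inequality along \emph{weakly} convergent $L^2$ sequences, which is strictly stronger than $\Gamma$-convergence in the strong topology, and your equi-coercivity step (bounded energy plus weak $L^2$ convergence forces the limit to be the weak $H^1$ limit on the inner annulus) is exactly the bridge needed; your normal-coordinate computations are the hands-on counterpart of Lemma \ref{l1}. Two small repairs are needed. First, in the recovery sequence you should set $u_\epsilon=u$ (not $0$) outside $\Omega_r\setminus\overline{\Omega_l}$: as written, the domains of $F_\epsilon$ and $F_G$ constrain only the restrictions of $u$, so forcing $u_\epsilon=0$ off the collar would destroy the strong convergence $u_\epsilon\to u$ in $L^2(\Omega^*)$ whenever $u$ does not vanish there. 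Second, the passage from traces $\gamma u\in H^1(\partial\Omega_\ell)$ to general $u$ in the domain of $F_G$ should be done by density-in-energy together with a diagonal argument (using metrizability of strong $L^2$ convergence); lower semicontinuity of $F_G$ alone does not deliver the limsup inequality.
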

 
\begin{theorem}
\label{teo2m} 
Under 
\begin{equation} 
\frac{\alpha}{\epsilon} \rightarrow 0 \,\text{as}\,\epsilon\rightarrow 0,
\end{equation}
the sequence of functionals $F_{\epsilon}$, defined in (\ref{A(n)2}), $M-$converges in $L^2(\Omega^*)$ to the functional 
\begin{equation}
F_{0}[u]=
\begin{cases}
\int_{\Omega_\ell \setminus \overline{\Omega_l}}  |\nabla u|^2 dx    & \text{if} \,u|_{\Omega_\ell \setminus \overline{\Omega_l}} \in H^1(\Omega_\ell \setminus \overline{\Omega_l}) \, \textrm{ s.t. } u=0 \, \textrm{ on } \partial \Omega_l  \\
+ \infty  &\text{otherwise in }  L^2(\Omega^*)
\end{cases} . \label{TeoA(u)22bis}
\end{equation}
\end{theorem}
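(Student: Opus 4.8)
The plan is to verify directly the two conditions (a) and (b) of Definition \ref{def1} for the functionals \eqref{A(n)2} and the candidate limit \eqref{TeoA(u)22bis} in $H=L^2(\Omega^*)$, using throughout that $1-\alpha\to 1$ as $\epsilon\to 0$. The liminf inequality (b) is the routine half; the recovery-sequence inequality (a) is where the hypothesis $\alpha/\epsilon\to 0$ enters.

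\emph{Condition (b).} Let $w_\epsilon\weak u$ weakly in $L^2(\Omega^*)$. If $\liminf F_\epsilon[w_\epsilon]=+\infty$ there is nothing to prove, so pass to a subsequence along which the liminf is a finite limit. Finiteness forces $w_\epsilon|_{\Omega_r\setminus\overline{\Omega_l}}\in H^1_0(\Omega_r\setminus\overline{\Omega_l})$, and since
\[
(1-\alpha)\int_{\Omega_\ell\setminus\overline{\Omega_l}}|\nabla w_\epsilon|^2\,dx \le F_\epsilon[w_\epsilon]\le C,
\]
the restrictions $w_\epsilon|_{\Omega_\ell\setminus\overline{\Omega_l}}$ are bounded in $H^1(\Omega_\ell\setminus\overline{\Omega_l})$; together with $w_\epsilon\weak u$ in $L^2$ this gives $w_\epsilon\weak u$ weakly in $H^1(\Omega_\ell\setminus\overline{\Omega_l})$, so $u|_{\Omega_\ell\setminus\overline{\Omega_l}}\in H^1$ and, by weak continuity of the trace, $u=0$ on $\partial\Omega_l$. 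Discarding the nonnegative shell term and using weak lower semicontinuity of the Dirichlet energy,
\[
\liminf_{\epsilon\to 0}F_\epsilon[w_\epsilon]\ge \liminf_{\epsilon\to 0}(1-\alpha)\int_{\Omega_\ell\setminus\overline{\Omega_l}}|\nabla w_\epsilon|^2\,dx \ge \int_{\Omega_\ell\setminus\overline{\Omega_l}}|\nabla u|^2\,dx = F_0[u].
\]

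\emph{Condition (a).} Given $u$ with $F_0[u]<+\infty$ (otherwise take $u_\epsilon\equiv u$), fix an $H^1$-extension $\tilde u=Eu$ of $u|_{\Omega_\ell\setminus\overline{\Omega_l}}$ to a fixed neighbourhood of $\overline{\Omega_\ell}$ and glue $u$ on $\Omega_\ell\setminus\overline{\Omega_l}$ to a linearly cut-off extension on the collar:
\[
u_\epsilon(x)=\begin{cases} u(x), & x\in\Omega_\ell\setminus\overline{\Omega_l},\\[2pt] \dfrac{r-|x|}{\epsilon}\,\tilde u(x), & x\in\Sigma_\epsilon,\\[2pt] 0, & \text{elsewhere in }\Omega^*. \end{cases}
\]
By construction $u_\epsilon$ matches $u$ across $\partial\Omega_\ell$, vanishes on $\partial\Omega_l$ (since $F_0[u]<\infty$) and on $\partial\Omega_r$ (since the factor vanishes there), so $u_\epsilon|_{\Omega_r\setminus\overline{\Omega_l}}\in H^1_0(\Omega_r\setminus\overline{\Omega_l})$, and $u_\epsilon\to u$ strongly in $L^2(\Omega^*)$ because the two agree on $\Omega_\ell\setminus\overline{\Omega_l}$ while $\|u_\epsilon\|_{L^2(\Sigma_\epsilon)}^2\le\|\tilde u\|_{L^2(\Sigma_\epsilon)}^2\to 0$. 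The first term of $F_\epsilon[u_\epsilon]$ equals $(1-\alpha)\int_{\Omega_\ell\setminus\overline{\Omega_l}}|\nabla u|^2\to F_0[u]$. For the shell term, from $\nabla u_\epsilon=\frac{r-|x|}{\epsilon}\nabla\tilde u-\frac{\tilde u}{\epsilon}\frac{x}{|x|}$ and $0\le\frac{r-|x|}{\epsilon}\le 1$ on $\Sigma_\epsilon$,
\[
\alpha\int_{\Sigma_\epsilon}|\nabla u_\epsilon|^2\,dx \le 2\alpha\int_{\Sigma_\epsilon}|\nabla\tilde u|^2\,dx + \frac{2\alpha}{\epsilon^2}\int_{\Sigma_\epsilon}\tilde u^2\,dx.
\]
The first summand is $\le 2\alpha\|\nabla\tilde u\|_{L^2}^2\to 0$. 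For the second, a one-dimensional Cauchy--Schwarz estimate in the radial variable yields the collar bound $\int_{\Sigma_\epsilon}\tilde u^2\,dx\le C\epsilon\big(\|\tilde u\|_{L^2(\partial\Omega_\ell)}^2+\epsilon\|\nabla\tilde u\|_{L^2(\Sigma_\epsilon)}^2\big)=O(\epsilon)$, so this summand is $O(\alpha/\epsilon)\to 0$ precisely under the hypothesis. Hence $\limsup_{\epsilon\to 0}F_\epsilon[u_\epsilon]\le F_0[u]$, and (a), (b) together give the $M$-convergence.

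The main obstacle is exactly this collar estimate in (a): enforcing the Dirichlet datum on $\partial\Omega_r$ forces a cut-off whose gradient is of order $1/\epsilon$, so its contribution scales like $\alpha\,\epsilon^{-2}\!\int_{\Sigma_\epsilon}\tilde u^2\sim \alpha/\epsilon$, and it is the regime $\alpha/\epsilon\to 0$ that annihilates this boundary-layer term and produces the free (Neumann) condition on $\partial\Omega_\ell$ in $F_0$. Compare the constant $G$ in Theorem \ref{teo1m}, which is the surviving limit of the very same term when $\alpha/\epsilon\to G\in(0,\infty)$.
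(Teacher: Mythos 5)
Your proof takes a genuinely different route from the paper in the sense that the paper offers no proof of Theorem \ref{teo2m} at all: it recalls the statement and defers to Theorem II.2 of \cite{AB} in the framework of $\Gamma$-convergence, noting only that Lemma \ref{l1} (Lemma III.1 of \cite{AB}) plays the key role. What you wrote is a self-contained verification of both conditions of Definition \ref{def1}, and it follows the same underlying Acerbi--Buttazzo strategy: for (b), an $H^1$ bound on $\Omega_\ell\setminus\overline{\Omega_l}$ from $1-\alpha\to 1$, weak compactness, weak continuity of the trace on $\partial\Omega_l$, and weak lower semicontinuity of the Dirichlet integral; for (a), a recovery sequence given by a linear cut-off on the collar $\Sigma_\epsilon$, estimated by the collar bound $\int_{\Sigma_\epsilon}\tilde u^2\,dx=O(\epsilon)$ --- your one-dimensional Cauchy--Schwarz estimate is exactly the content of Lemma \ref{l1}. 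You also identify correctly the single point where the hypothesis enters: the cut-off contributes $O(\alpha/\epsilon)$, which vanishes here, survives as $G\int_{\partial\Omega_\ell}u^2\,ds$ when $\alpha/\epsilon\to G$ (Theorem \ref{teo1m}), and forces $u=0$ on $\partial\Omega_\ell$ when $\alpha/\epsilon\to\infty$ (Theorem \ref{teo3m}). So your argument buys a self-contained proof where the paper gives a citation.

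There is, however, one slip that must be repaired. In condition (a) you set $u_\epsilon=0$ ``elsewhere in $\Omega^*$'', i.e.\ on $\Omega_l$ and on $\Omega^*\setminus\overline{\Omega_r}$. But the finiteness condition of $F_0$ constrains only the restriction $u|_{\Omega_\ell\setminus\overline{\Omega_l}}$: an admissible $u$ with $F_0[u]<\infty$ may be a nonzero $L^2$ function on $\Omega_l$ or on $\Omega^*\setminus\overline{\Omega_r}$, and for such $u$ your sequence converges to $u\mathbf{1}_{\Omega_r\setminus\overline{\Omega_l}}\neq u$ in $L^2(\Omega^*)$, so condition (a) is not verified on all of $L^2(\Omega^*)$. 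The fix is immediate: define $u_\epsilon=u$ on $\Omega^*\setminus(\Omega_r\setminus\overline{\Omega_l})$ instead of $0$. Since $F_\epsilon[u_\epsilon]$ depends only on $u_\epsilon|_{\Omega_r\setminus\overline{\Omega_l}}$ (no global $H^1(\Omega^*)$ regularity across $\partial\Omega_l$ or $\partial\Omega_r$ is required by \eqref{A(n)2}), the energy computation is unchanged, and strong convergence now holds because $u_\epsilon=u$ off $\Sigma_\epsilon$ while $\|u_\epsilon-u\|_{L^2(\Sigma_\epsilon)}\le\|\tilde u\|_{L^2(\Sigma_\epsilon)}+\|u\|_{L^2(\Sigma_\epsilon)}\to 0$ by absolute continuity of the integral. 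With this modification your proof is complete.
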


\begin{theorem}
\label{teo3m} 
Under 
\begin{equation} 
\frac{\alpha}{\epsilon} \rightarrow \infty \,\text{as}\,\epsilon\rightarrow 0,
\end{equation}
the sequence of functionals $F_{\epsilon}$, defined in (\ref{A(n)2}), $M-$converges in $L^2(\Omega^*)$ to the functional 
\begin{eqnarray}
\label{Finf}
F_\infty[u]=\left\{
\begin{array}{lll}
\int_{\Omega_\ell \setminus \overline{\Omega_l}}  |\nabla u|^2 d x  &\text{if} \quad u|_{\Omega_\ell \setminus \overline{\Omega_l}} \in H_0^1(\Omega_\ell \setminus \overline{\Omega_l})\\
+ \infty \quad\quad\qquad\quad\quad\qquad\qquad &\text{otherwise in } L^2(\Omega^*). 
\end{array}
\right.  .
\end{eqnarray}
\end{theorem}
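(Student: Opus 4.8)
The plan is to verify directly the two conditions in Definition \ref{def1} for $H=L^2(\Omega^*)$ and $F=F_\infty$, exploiting that as $\epsilon\to 0$ we have both $\alpha=\alpha_\epsilon\to 0$ and the collar $\Sigma_\epsilon$ collapsing onto $\partial\Omega_\ell$. The recovery (limsup) condition (a) is routine; the liminf condition (b) carries the analytic content, since it is there that the regime $\alpha/\epsilon\to\infty$ must be used to force the homogeneous Dirichlet datum on $\partial\Omega_\ell$ encoded in the target space $H^1_0(\Omega_\ell\setminus\overline{\Omega_l})$.

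For part (a), fix $u$ with $F_\infty[u]<\infty$, so that $u|_{\Omega_\ell\setminus\overline{\Omega_l}}\in H^1_0(\Omega_\ell\setminus\overline{\Omega_l})$ and in particular has vanishing trace on $\partial\Omega_\ell$. I would take the recovery sequence $u_\epsilon$ equal to $u$ on $\Omega^*\setminus\Sigma_\epsilon$ and identically $0$ on the collar $\Sigma_\epsilon$. Since the inner trace of $u$ on $\partial\Omega_\ell$ already vanishes, the two pieces glue into a function whose restriction to $\Omega_r\setminus\overline{\Omega_l}$ lies in $H^1_0(\Omega_r\setminus\overline{\Omega_l})$ (zero trace on $\partial\Omega_l$ and on $\partial\Omega_r$). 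Then $\|u_\epsilon-u\|_{L^2(\Omega^*)}^2=\int_{\Sigma_\epsilon}|u|^2\,dx\to 0$ because $|\Sigma_\epsilon|\to 0$, while $F_\epsilon[u_\epsilon]=(1-\alpha)\int_{\Omega_\ell\setminus\overline{\Omega_l}}|\nabla u|^2\,dx\to\int_{\Omega_\ell\setminus\overline{\Omega_l}}|\nabla u|^2\,dx=F_\infty[u]$, using $\alpha\to 0$. This yields (a).

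For part (b), let $w_\epsilon\rightharpoonup u$ in $L^2(\Omega^*)$; we may assume $\liminf F_\epsilon[w_\epsilon]=:L<\infty$ and pass to a subsequence realizing it, so that $w_\epsilon|_{\Omega_r\setminus\overline{\Omega_l}}\in H^1_0(\Omega_r\setminus\overline{\Omega_l})$ with $F_\epsilon[w_\epsilon]$ bounded. Dropping the nonnegative collar term and using $\alpha\to 0$, the energies $\int_{\Omega_\ell\setminus\overline{\Omega_l}}|\nabla w_\epsilon|^2\,dx$ stay bounded, so up to a further subsequence $w_\epsilon\rightharpoonup u$ weakly in $H^1(\Omega_\ell\setminus\overline{\Omega_l})$; weak lower semicontinuity of the Dirichlet energy together with $(1-\alpha)\to 1$ gives $\liminf(1-\alpha)\int_{\Omega_\ell\setminus\overline{\Omega_l}}|\nabla w_\epsilon|^2\,dx\ge\int_{\Omega_\ell\setminus\overline{\Omega_l}}|\nabla u|^2\,dx$. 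It remains to certify that $u|_{\Omega_\ell\setminus\overline{\Omega_l}}$ is in $H^1_0$, i.e.\ that its trace on $\partial\Omega_\ell$ vanishes, which is exactly where the hypothesis enters.

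The decisive estimate is a collar trace (Poincar\'e) inequality. Writing points of $\Sigma_\epsilon$ in radial coordinates $\rho\in(\ell,\ell+\epsilon)$ and using $w_\epsilon=0$ on $\partial\Omega_r$, the fundamental theorem of calculus along radial segments combined with Cauchy--Schwarz gives, after integrating over $\partial\Omega_\ell$ and bounding the (uniformly controlled, since $\ell$ is fixed) Jacobian of the collar, an inequality of the form
\begin{equation*}
\int_{\partial\Omega_\ell}|w_\epsilon|^2\,ds\le C\,\epsilon\int_{\Sigma_\epsilon}|\nabla w_\epsilon|^2\,dx.
\end{equation*}
Since $\alpha\int_{\Sigma_\epsilon}|\nabla w_\epsilon|^2\,dx\le F_\epsilon[w_\epsilon]\le C$, this bounds the boundary integral by $C\,\epsilon/\alpha$, which tends to $0$ precisely because $\alpha/\epsilon\to\infty$. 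Hence the traces of $w_\epsilon$ on $\partial\Omega_\ell$ converge to $0$ in $L^2(\partial\Omega_\ell)$; as the trace operator $H^1(\Omega_\ell\setminus\overline{\Omega_l})\to L^2(\partial\Omega_\ell)$ is weakly continuous and $w_\epsilon\rightharpoonup u$ in $H^1(\Omega_\ell\setminus\overline{\Omega_l})$, the trace of $u$ on $\partial\Omega_\ell$ must vanish, so $u|_{\Omega_\ell\setminus\overline{\Omega_l}}\in H^1_0(\Omega_\ell\setminus\overline{\Omega_l})$ and $F_\infty[u]\le\liminf F_\epsilon[w_\epsilon]$. The main obstacle is making this collar trace inequality rigorous for general $H^1_0$ functions (arguing on a.e.\ radial line or by density, and checking that the interior and collar traces of $w_\epsilon$ on $\partial\Omega_\ell$ coincide, since $w_\epsilon\in H^1(\Omega_r\setminus\overline{\Omega_l})$ has no interface jump); everything else is the standard lower-semicontinuity machinery. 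Alternatively, one may invoke the $\Gamma$-convergence result of \cite{AB} directly and translate it into Mosco convergence.
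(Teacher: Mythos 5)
Your proof is correct, but it takes a genuinely different route from the paper: the paper gives no self-contained argument at all, simply recalling Theorems \ref{teo1m}--\ref{teo3m} from the literature and referring for the proof to Theorem II.2 of \cite{AB}, in the framework of $\Gamma$-convergence for reinforcement problems. You instead verify the two Mosco conditions directly. Your recovery sequence (extension of $u$ by zero across the collar) is legitimate precisely because the limit space $H^1_0(\Omega_\ell\setminus\overline{\Omega_l})$ already has vanishing trace on $\partial\Omega_\ell$, and your liminf argument isolates the key analytic fact as the collar trace estimate $\int_{\partial\Omega_\ell}|w_\epsilon|^2\,ds\le C\,\epsilon\int_{\Sigma_\epsilon}|\nabla w_\epsilon|^2\,dx$, which for concentric balls holds with $C=1$ in every dimension (integrate radially inward from $\partial\Omega_r$, where $w_\epsilon$ vanishes, apply Cauchy--Schwarz, and use $\rho^{d-1}\ge\ell^{d-1}$). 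Combined with the energy bound $\alpha\int_{\Sigma_\epsilon}|\nabla w_\epsilon|^2\,dx\le C$, this makes the boundary $L^2$ norm $O(\epsilon/\alpha)\to 0$, and weak continuity (indeed compactness) of the trace operator transfers the vanishing trace to the weak $H^1$ limit $u$; this is exactly where $\alpha/\epsilon\to\infty$ enters, and your argument exposes that mechanism, which the citation hides. Your route also buys something the paper glosses over: \cite{AB} is a $\Gamma$-convergence statement in the strong $L^2$ topology, whereas Mosco convergence requires the liminf inequality along \emph{weakly} convergent sequences, and your part (b) proves that stronger statement directly. Two small points would complete the write-up: membership in $H^1_0(\Omega_\ell\setminus\overline{\Omega_l})$ also requires vanishing trace on $\partial\Omega_l$, which follows by the same weak continuity of the trace operator since each $w_\epsilon$ has zero trace there; and the identification of the collar-side and interior traces of $w_\epsilon$ on $\partial\Omega_\ell$, which you flag as an obstacle, is immediate because $w_\epsilon$ restricted to $\Omega_r\setminus\overline{\Omega_l}$ is a single $H^1$ function on that annulus.
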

  
From the previous theorems we obtain the convergence of the solutions, see Theorem II.1, Theorems  II.2 and III.3 of \cite{AB}  (see also \cite{BCF}). We note that the following Lemma III.1  of  \cite{AB} plays a key role in this framework.
\begin{lemma}\label{l1}   Let $a:\R^d\to \R$ be a continuous function and $u\in H^{1}(\R^d)$
then $$\lim_{\epsilon\to 0}\frac1{\epsilon} \int_{\Sigma _{\epsilon}}  a(x) |u(x)|^2 dx= \int_{\partial \Omega_\ell}
 a(s) |u(s)|^2 ds.$$
\end{lemma}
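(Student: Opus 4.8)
The plan is to exploit the tubular-neighbourhood structure of $\Sigma_\epsilon$ (the outer $\epsilon$-collar of $\partial\Omega_\ell$) to reduce the statement to a one-dimensional averaging fact, and then to upgrade from smooth to general $u$ by a density argument in $H^1$. First I would reduce to $a$ bounded: since $a$ is continuous and all the $\Sigma_\epsilon$ with $0<\epsilon\le1$ lie in a fixed compact neighbourhood of $\partial\Omega_\ell$, we may assume $\|a\|_\infty<\infty$ on the relevant region. It is then convenient to introduce the bilinear forms $Q_\epsilon(v,w):=\frac1\epsilon\int_{\Sigma_\epsilon}a\,v\,w\,dx$ and $Q_0(v,w):=\int_{\partial\Omega_\ell}a\,v\,w\,ds$, so that the left-hand side of the claim is $T_\epsilon(u):=Q_\epsilon(u,u)$ and the right-hand side is $T_0(u):=Q_0(u,u)$; the goal is $T_\epsilon(u)\to T_0(u)$.

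Writing $\delta(x):=\mathrm{dist}(x,\partial\Omega_\ell)$ (so $|\nabla\delta|\equiv1$ on $\Sigma_\epsilon$) and applying the coarea formula together with the normal parametrisation $s\mapsto s+t\,\mathbf n(s)$ of the parallel surface $\{\delta=t\}$, I would obtain
$$\frac1\epsilon\int_{\Sigma_\epsilon}a|u|^2\,dx=\frac1\epsilon\int_0^\epsilon\Big(\int_{\partial\Omega_\ell}a\big(s+t\mathbf n(s)\big)\,\big|u\big(s+t\mathbf n(s)\big)\big|^2\,J_t(s)\,d\mathcal H^{d-1}(s)\Big)\,dt,$$
where $J_t$ is the Jacobian of the normal map, satisfying $J_t\to1$ uniformly as $t\to0^+$ (for the ball $\Omega_\ell$ this is explicit, $J_t(s)=(1+t/\ell)^{d-1}$). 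For $u\in C_c^\infty(\R^d)$ the inner integral is a continuous function of $t$ whose value at $t=0$ is exactly $T_0(u)$, so the average over $t\in[0,\epsilon]$ converges to $T_0(u)$. This establishes the lemma on the dense subspace $C_c^\infty(\R^d)$.

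To pass to a general $u\in H^1(\R^d)$, pick $u_n\in C_c^\infty$ with $u_n\to u$ in $H^1$ and use the bilinear identity $T_\epsilon(u)-T_\epsilon(u_n)=Q_\epsilon(u-u_n,u)+Q_\epsilon(u_n,u-u_n)$ with the Cauchy--Schwarz bound $|Q_\epsilon(v,w)|\le\|a\|_\infty\big(\frac1\epsilon\int_{\Sigma_\epsilon}|v|^2\big)^{1/2}\big(\frac1\epsilon\int_{\Sigma_\epsilon}|w|^2\big)^{1/2}$. The crucial ingredient is the \emph{uniform thin-shell estimate}
$$\frac1\epsilon\int_{\Sigma_\epsilon}|w|^2\,dx\le C\,\|w\|_{H^1(\R^d)}^2\qquad(0<\epsilon\le1),$$
with $C$ independent of $\epsilon$, which I would derive from the elementary one-dimensional inequality $\frac1\epsilon\int_0^\epsilon g(t)^2\,dt\le g(0)^2+\int_0^\epsilon\big(g^2+(g')^2\big)\,dt$ (applied along normal rays, with $g'$ the radial derivative) integrated over $\partial\Omega_\ell$ via coarea, together with the usual trace inequality $\int_{\partial\Omega_\ell}|w|^2\,ds\le C\|w\|_{H^1}^2$ handling the $g(0)^2$ term. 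Granting this, both $T_\epsilon$ and $T_0$ are controlled by $\|\cdot\|_{H^1}$ uniformly in $\epsilon$, and a standard three-term ($3\eta$) argument — approximate $u$ by $u_n$, apply the smooth case at fixed $n$, then send $\epsilon\to0$ — yields $T_\epsilon(u)\to T_0(u)$.

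The hard part is precisely this uniform-in-$\epsilon$ control. Because $H^1$ functions have no pointwise values when $d\ge2$, one cannot invoke continuity of the integrand directly, and one must show that the constant in the shell estimate does not degenerate as the collar width $\epsilon\to0$; the one-dimensional inequality above is exactly what keeps it bounded. The geometric input (regularity of $\partial\Omega_\ell$ ensuring $J_t\to1$ and the validity of the normal coordinates for small $\epsilon$) is routine for the spheres considered here, and once the uniform bound is secured the remaining limit is standard.
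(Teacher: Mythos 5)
Your proposal is correct, but note that the paper itself contains no proof of this statement: it is quoted verbatim as Lemma III.1 of Acerbi--Buttazzo \cite{AB} and used as a black box, so there is no internal argument to compare against. Your argument is, in effect, a self-contained reconstruction of the classical proof, and all of its ingredients hold up: the reduction to bounded $a$ on a fixed compact collar, the coarea/normal-coordinate identity with $J_t(s)=(1+t/\ell)^{d-1}\to 1$ (explicit here because $\Omega_\ell$ is a ball, which is exactly the paper's setting), the convergence of the $t$-average for $u\in C_c^\infty$, and the density step. You correctly identify the only genuinely delicate point, namely the thin-shell estimate $\frac1\epsilon\int_{\Sigma_\epsilon}|w|^2\,dx\le C\|w\|_{H^1(\R^d)}^2$ with $C$ independent of $\epsilon$, and your derivation of it (the one-dimensional inequality $\frac1\epsilon\int_0^\epsilon g^2\,dt\le g(0)^2+\int_0^\epsilon(g^2+(g')^2)\,dt$ along normal rays, integrated over $\partial\Omega_\ell$, plus the trace inequality for the $g(0)^2$ term) is sound; the only point worth making explicit is that this shell estimate, proved first for smooth $w$, extends to all of $H^1$ at fixed $\epsilon$ by density (or by the ACL characterization of Sobolev functions), which is what licenses applying it to $u-u_n$ in the three-term argument. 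What your route buys, compared with the paper's bare citation, is a proof tailored to spherical interfaces that makes visible where the uniformity in $\epsilon$ comes from; what the citation buys the authors is generality (the lemma in \cite{AB} covers smooth, not necessarily spherical, interfaces) at the cost of opacity.
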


We also remark that, by Lemma  \ref{l1},  we obtain that $$\lim_{\epsilon\to 0} \frac{\alpha }{\lambda} \int_{\Sigma_\epsilon}  f \psi dx = \lim_{\epsilon\to 0} \frac{\alpha \epsilon}{\lambda \epsilon} \int_{\Sigma_\epsilon}  f \psi dx= \lim_{\epsilon\to 0} \frac{\alpha \epsilon}{\lambda} \int_{\partial \Omega_\ell} f\psi ds$$
and by  \eqref{main-condition} we obtain $$\lim_{\epsilon\to 0} \frac{\alpha }{\lambda} \int_{\Sigma_\epsilon}  f \psi dx =0.$$

\begin{remark}
From the previous calculation  we obtain substantially different limit problems if $\alpha \epsilon/\lambda \to F \neq 0$. We will investigate this aspect in a forthcoming paper.
\end{remark}

\begin{remark}
Diffusions across fractal interfaces can be studied by using the approach based on Dirichlet form, in particular by extending Theorem \ref{teo1m}, Theorem \ref{teo2m} and Theorem \ref{teo3m}, see\cite{CV-asy}. This framework has been considered in \cite{pota} in order to study the asymptotic behaviour of corresponding multiplicative functionals only in the case that $\alpha=\lambda$. Our aim is therefore to extend such results on irregular domains in the case that $\alpha \neq \lambda$. 
\end{remark}

\begin{remark}
The approaches considered in the present paper can be also extended to Brownian motions time-changed by an inverse to a stable subordinator. That is, we can consider a delayed Brownian motion and it is our feeling that the asymptotic analysis could be deeply affected by the delay.
\end{remark}

\end{document}